\newtheorem{theorem}{Theorem}
\newtheorem{corollary}[theorem]{Corollary}
\newtheorem{definition}[theorem]{Definition}
\newtheorem{lemma}[theorem]{Lemma}
\newtheorem{notation}[theorem]{Notation}
\newtheorem{proposition}[theorem]{Proposition}
\newtheorem{remark}[theorem]{Remark}
\begin{document}
\title[ ]{A Study on $q$-Appell Polynomials from Determinantal Point of View}
\author{Marzieh Eini Keleshteri and Naz{i}m I. Mahmudov}
\address{Mathematics Department, Eastern Mediterranean University, Famagusta, North
Cyprus, via Mersin 10, Turkey}
\email{marzieh.eini@emu.edu.tr, nazim.mahmudov@emu.edu.tr}
\subjclass{Special Functions}
\keywords{$q$-polynomials, Determinantal, Algebraic, Appell, Euler, Bernoulli, Genocchi}

\begin{abstract}
This research is aimed to give a determinantal definition for the $q$-Appell
polynomials and show some classical properties as well as find some
interesting properties of the mentioned polynomials in the light of the new definition.

\end{abstract}
\maketitle

\section{Introduction, preliminaries and definitions}

Throughout this research we always apply the following notations: $\mathbb{N}$
indicates the set of natural numbers, $\mathbb{N}_{0}$ indicates the set of
non-negative integers, $\mathbb{R}$ indicates set of all real numbers, and
$\mathbb{C}$ denotes the set of complex numbers. We refer the readers to
\cite{Andrews} for all the following $q$-standard notations. The $q$-shifted
factorial is defined as%
\[
(a;q)_{0}=1,\ (a;q)_{n}=\prod\limits_{j=0}^{n-1}(1-q^{j}a),\
n\in\mathbb{N}, \  (a;q)_{\infty}=\prod\limits_{j=0}^{\infty}(1-q^{j}%
a),\  |q|<1,a\in\mathbb{C}.
\]
The $q$-numbers and $q$-factorial are defined by%
\newline
\[\lbrack a]_{q}=\frac{1-q^{a}}{1-q}\ \ (q\neq1),\ \ \lbrack0]!=1,\quad\lbrack
n]_{q}!=[1]_{q}[2]_{q}\ldots\lbrack n]_{q},\quad n\in\mathbb{N},a\in\mathbb{C},
\]
respectively. The $q$-polynomial coefficient is defined by%
\[
\left[
\begin{array}
[c]{c}%
n\\
k
\end{array}
\right]  _{q}=\frac{[n]_{q}!}{[k]_{q}![n-k]_{q}!}.
\]
The $q$-analogue of the function $(x+y)^{n}$ is defined by%
\begin{equation}
(x+y)_{q}^{n}:=\sum\limits_{k=0}^{n}\left[
\begin{array}
[c]{c}%
n\\
k
\end{array}
\right]  _{q}q^{1/2k(k-1)}x^{n-k}y^{k},\quad n\in\mathbb{N}_{0}. \label{00}%
\end{equation}
The $q$-binomial formula is known as%
\[
(1-a)_{q}^{n}=\prod\limits_{j=0}^{n-1}(1-q^{j}a)=\sum\limits_{k=0}^{n}\left[
\begin{array}
[c]{c}%
n\\
k
\end{array}
\right]  _{q}q^{1/2k(k-1)}(-1)^{k}a^{k}.
\]
In the standard approach to the $q$-calculus two exponential functions are
used:%
\begin{align*}
e_{q}\left(  z\right)   &  =\sum_{n=0}^{\infty}\frac{z^{n}}{\left[  n\right]
_{q}!}=\prod_{k=0}^{\infty}\frac{1}{\left(  1-\left(  1-q\right)
q^{k}z\right)  },\ \ \ 0<\left\vert q\right\vert <1,\ \left\vert z\right\vert
<\frac{1}{\left\vert 1-q\right\vert },\ \ \ \ \ \ \ \\
E_{q}\left(  z\right)   &  =\sum_{n=0}^{\infty}\frac{q^{\frac{1}{2}n\left(
n-1\right)  }z^{n}}{\left[  n\right]  _{q}!}=\prod_{k=0}^{\infty}\left(
1+\left(  1-q\right)  q^{k}z\right)  ,\ \ \ \ \ \ \ 0<\left\vert q\right\vert
<1,\ z\in\mathbb{C}.\
\end{align*}
From this form we easily see that $e_{q}\left(  z\right)  E_{q}\left(
-z\right)  =1$. Moreover,%
\[
D_{q}e_{q}\left(  z\right)  =e_{q}\left(  z\right)  ,\ \ \ \ D_{q}E_{q}\left(
z\right)  =E_{q}\left(  qz\right)  ,
\]

The $q$-derivative of a function $f$ at point $0\neq z\in\mathbb{C}$ is
defined as%
\begin{equation}
D_{q}f\left(  z\right)  :=\frac{f\left(  qz\right)  -f\left(  z\right)
}{qz-z},\ \ \ \ 0<\left\vert q\right\vert <1. \label{q-der}%
\end{equation}

\begin{proposition}
\cite{Kac} Consider two arbitrary functions $f(z)$ and $g(z)$. The following
relations hold for the $q$-derivative:

\begin{enumerate}
\item[a)] if $f$ is differentiable, \ \
\[
\ \lim_{q\rightarrow1}D_{q}f\left(  z\right)  =\frac{df(z)}{dz},
\]
where $\frac{d}{dz}$ indicates the ordinary derivative is defined in Calculus.

\item[b)] $D_{q}$ is a linear operator; that is, for arbitrary constants $a$
and $b$%
\[
D_{q}(af(z)+bg(z))=aD_{q}(f(z))+bD_{q}(g(z)),
\]

\item[c)]
\[
D_{q}(f(z)g(z))=f(qz)D_{q}g(z)+g(z)D_{q}f(z),
\]

\item[d)]
\[
D_{q}(\frac{f(z)}{g(z)})=\frac{g(qz)D_{q}f(z)-f(qz)D_{q}g(z)}{g(z)g(qz)}.
\]

\end{enumerate}
\end{proposition}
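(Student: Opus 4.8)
The plan is to verify each of the four items directly from the difference quotient \eqref{q-der}; all of them are elementary once the quotient is written out explicitly.

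For part (a), I would set $h=qz-z=(q-1)z$ and observe that, for a fixed $z\neq 0$, one has $h\to 0$ exactly when $q\to 1$. Rewriting $D_q f(z)=\frac{f(z+h)-f(z)}{h}$ and invoking the definition of the ordinary derivative of the differentiable function $f$ at $z$ then gives $\lim_{q\to 1}D_q f(z)=f'(z)$. For part (b), the quantity $D_q(af+bg)(z)$ is $\frac{(af+bg)(qz)-(af+bg)(z)}{qz-z}$; since evaluation at $qz$ and at $z$ is linear in the function and the denominator does not depend on the function, this splits as $aD_q f(z)+bD_q g(z)$.

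For part (c), I would compute $D_q(fg)(z)=\frac{f(qz)g(qz)-f(z)g(z)}{qz-z}$ and insert the telescoping term, writing the numerator as $f(qz)\bigl(g(qz)-g(z)\bigr)+g(z)\bigl(f(qz)-f(z)\bigr)$; dividing by $qz-z$ yields $f(qz)D_q g(z)+g(z)D_q f(z)$. (Inserting $f(z)g(qz)$ instead produces the equivalent form in which the roles of $f$ and $g$ at the shifted point $qz$ are exchanged.) For part (d), the cleanest route is to apply (c) to the identity $f=\tfrac{f}{g}\cdot g$: this gives $D_q f(z)=\tfrac{f(qz)}{g(qz)}D_q g(z)+g(z)D_q\!\bigl(\tfrac{f}{g}\bigr)(z)$, and solving for $D_q\!\bigl(\tfrac{f}{g}\bigr)(z)$ produces $\frac{g(qz)D_q f(z)-f(qz)D_q g(z)}{g(z)g(qz)}$, valid wherever $g(z)g(qz)\neq 0$. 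Alternatively one expands $D_q\!\bigl(\tfrac{f}{g}\bigr)(z)=\frac{f(qz)g(z)-f(z)g(qz)}{g(z)g(qz)(qz-z)}$ directly and recognizes the numerator as $g(qz)\bigl(f(qz)-f(z)\bigr)-f(qz)\bigl(g(qz)-g(z)\bigr)$.

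There is essentially no genuine obstacle here; the only points that deserve a word of care are keeping $z\neq 0$ in part (a) so that $h=(q-1)z$ is a bona fide nonzero increment as $q\to1$, and making a consistent choice in parts (c) and (d) of which argument is shifted to $qz$ in the telescoping step, so that the output matches the stated (asymmetric) formulas rather than their mirror images.
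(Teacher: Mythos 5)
Your proposal is correct: all four identities follow by exactly the direct manipulations of the difference quotient that you describe, and your telescoping in (c) and your reduction of (d) to (c) both reproduce the stated (asymmetric) forms. The paper itself offers no proof, simply citing Kac and Cheung, and your elementary verification from the definition of $D_q$ is precisely the standard argument given there.
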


For the first time in 1909 Jackson introduced the $q$-analogue of \ Taylor
series expansion of an arbitrary function $f(z)$ for $0<q<1$, as follows%

\begin{equation}
f(z)=\sum\limits_{n=0}^{\infty}\frac{(1-q)^{n}}{(q;q)_{n}}D_{q}^{n}%
f(a)(z-a)_{q}^{n}, \label{0}%
\end{equation}

where $D_{q}^{n}f(a)$ is the $n^{th}$ $q$-derivative of the function $f$ at
point $a.$

Furthermore, Jackson integral of an arbitrary function $f(x)$ is defined as,
\cite{Kac}%

\begin{equation}
\int f(x)d_{q}x=(1-q)\sum\limits_{n=0}^{\infty}xq^{j}f(xq^{j}),\quad 0<q<1. \label{int}%
\end{equation}

Appell polynomials for the first time were defined by Appell in 1880,
\cite{Appell}. Inspired by the work of Throne \cite{Throne}, Sheffer
\cite{Sheffer}, and Varma \cite{Varma}, Al-Salam, in 1967, introduced the family
of $q$-Appell polynomials $\{A_{n,q}(x)\}_{n=0}^{\infty}$, and studied some of
their properties \cite{Alsalam}. According to his definition, the n-degree
polynomials $A_{n,q}(x)$ are called $q$-Appell if they hold the following
$q$-differential equation%

\begin{equation}
D_{q,x}(A_{n,q}(x))=[n]_{q}A_{n-1,q}(x),\quad n=0,1,2,...\quad
\label{1}%
\end{equation}

Note to the fact that $A_{0,q}(x)$ is a non zero constant let say $A_{0,q}$.
To begin with the relation(\ref{1}) for $n=1$, i. e.%

\[
D_{q,x}(A_{1,q}(x))=[1]_{q}A_{0,q}(x)=A_{0,q}.
\]
Using Jackson integral for the $q$-differential equation above, we get%

\[
A_{1,q}(x)=A_{0,q}x+A_{1,q},
\]
where $A_{1,q}$ is an arbitrary constant. We can repeat the method above to obtain
$A_{2,q}(x),$ as below by starting from the property(\ref{1}) for $q$-Appell polynomials%

\[
D_{q,x}(A_{2,q}(x))=[2]_{q}A_{1,q}x=[2]_{q}A_{0,q}x+[2]_{q}A_{1,q}.
\]
Now take Jackson integral%

\[
A_{2,q}(x)=A_{0,q}x^{2}+[2]_{q}A_{1,q}+A_{2,q},
\]
where $A_{2,q\quad}$is an arbitrary constant.

By using induction on $n$ and applying similar method to the methods
used for finding $A_{1,q}(x)$, $A_{2,q}(x)$ and continuing taking Jackson
integrals we have%
\[
A_{n-1,q}(x)=A_{n-1,q}+\left[
\begin{array}
[c]{c}%
n-1\\
1
\end{array}
\right]  _{q}A_{n-2,q}x+\left[
\begin{array}
[c]{c}%
n-1\\
2
\end{array}
\right]  _{q}A_{n-3,q}x^{2}+...+A_{0,q}x^{n-1}.
\]
Considering the fact that for $ n=1,2,3,...,$ every $A_{n,q}(x)$ satisfies the relation (\ref{1}), we can write %
\begin{multline*}
D_{q,x}(A_{n,q}(x))=[n]_{q}A_{n-1,q}+[n]_{q}\left[
\begin{array}
[c]{c}%
n-1\\
1
\end{array}
\right]  _{q}A_{n-2,q}x \\
+[n]_{q}\left[
\begin{array}
[c]{c}%
n-1\\
2
\end{array}
\right]  _{q}A_{n-3,q}x^{2}+...+[n]_{q}A_{0,q}x^{n-1}.
\end{multline*}
Now, taking the Jackson integral of the $q$-differential equation above can lead
to%

\begin{multline*}
A_{n,q}(x)=A_{n,q}+[n]_{q}A_{n-1,q}x+\frac{[n]_{q}}{[2]_{q}}\left[
\begin{array}
[c]{c}%
n-1\\
1
\end{array}
\right]  _{q}A_{n-2,q}x^{2} \\
 +\frac{[n]_{q}}{[3]_{q}}\left[
\begin{array}
[c]{c}%
n-1\\
2
\end{array}
\right]  _{q}A_{n-3,q}x^{3}+...+\frac{[n]_{q}}{[n]_{q}}A_{0,q}x^{n},
\end{multline*}
where $A_{n,q}$ is an arbitrary constant. Since
\[
\frac{\lbrack n]_{q}}{[i]_{q}}\left[
\begin{array}
[c]{c}%
n-1\\
i-1
\end{array}
\right]  _{q}=\left[
\begin{array}
[c]{c}%
n\\
i
\end{array}
\right]  _{q},
\]
so for $n=0,1,2,...,$ we have%

\begin{equation}
A_{n,q}(x)=A_{n,q}+[n]_{q}A_{n-1,q}x+\left[
\begin{array}
[c]{c}%
n\\
2
\end{array}
\right]  _{q}A_{n-2,q}x^{2}+\left[
\begin{array}
[c]{c}%
n\\
3
\end{array}
\right]  _{q}A_{n-3,q}x^{3}+...+A_{0,q}x^{n}.
\label{1'}%
\end{equation}

It is worthy of note that according to the discussion above there exists a one
to one correspondence between the family of $q$-Appell polynomials
$\{A_{n,q}(x)\}_{n=0}^{\infty}$ and the numerical sequence $q$-Appell
polynomials $\{A_{n,q}\}_{n=0}^{\infty},$ \ \ $A_{n,q}\neq0.$ Moreover, every
$A_{n,q}(x)$ can be obtained recursively from $A_{n-1,q}(x)$ for
$n\geqslant1.$

Also, $q$-Appell polynomials can be defined by means of generating function
$A_{q}(t)$, as follows%

\begin{equation}
A_{q}(x,t):=A_{q}(t)e_{q}(tx)=\sum_{n=0}^{\infty}A_{n,q}(x)\frac{t^{n}%
}{\left[  n\right]  _{q}!},\quad 0<q<1, \label{2}%
\end{equation}

where
\begin{equation}
A_{q}(t):=\sum_{n=0}^{\infty}A_{n,q}\frac{t^{n}}{\left[  n\right]  _{q}%
!},\ \ A_{q}(t)\neq0, \label{3}%
\end{equation}

is an analytic function at $t=0$, $A_{n,q}(x):=A_{n,q}(0),$ and
$e_{q}(t)=\sum_{n=0}^{\infty}\frac{t^{n}}{\left[  n\right]  _{q}!}.$

Based on different selections for the generating function $A_{q}(t)$,
different families of $q$-Appell polynomials can be obtained. In the following
we mention some of them:

\begin{itemize}
\item[a)] Taking $A_{q}(t)=[1]_{q}=1$ leads to obtain the family including
all increasing integer powers of x starting from 0,%

\[
\{1,x,x^{2},x^{3},...\}.
\]

\item[b)] Taking $A_{q}(t)=\left(  \frac{t^{m}}{e_{q}\left(  t\right)
-T_{m-1,q}\left(  t\right)  }\right)  ^{\alpha}$, leads to obtain the family
of generalized $q$-Bernoulli polynomials $\mathfrak{B}_{n,q}^{[m-1,\alpha
]}(x,0)$, \cite{Mah}.

\item[c)] Taking $A_{q}(t)=\left(  \frac{2^{m}}{e_{q}\left(  t\right)
+T_{m-1,q}\left(  t\right)  }\right)  ^{\alpha}$, leads to obtain the family
of generalized $q$-Euler polynomials $\mathfrak{E}_{n,q}^{[m-1,\alpha]}(x,0)$,
\cite{Mah}.

\item[d)] Taking $A_{q}(t)=\left(  \frac{2^{m}t^{m}}{e_{q}\left(  t\right)
+T_{m-1,q}\left(  t\right)  }\right)  ^{\alpha}$, leads to obtain the family
of generalized $q$-Genocchi polynomials $\mathfrak{G}_{n,q}^{[m-1,\alpha
]}(x,0)$, \cite{Mah}.

\item[e)] Taking $A_{q}(t)=\left(  \frac{t^{m}}{\lambda e_{q}\left(  t\right)
-T_{m-1,q}\left(  t\right)  }\right)  ^{\alpha}$, leads to obtain the family
of generalized $q$-Apostol Bernoulli polynomials $B_{n,q}^{[m-1,\alpha
]}\left(  x,0;\lambda\right)  $ of order $\alpha$, \cite{Mah1}.

\item[f)] Taking $A_{q}(t)=\left(  \frac{2^{m}}{\lambda e_{q}\left(  t\right)
+T_{m-1,q}\left(  t\right)  }\right)  ^{\alpha}$, leads to obtain the family
of $q$-Apostol-Euler polynomials $E_{n,q}^{[m-1,\alpha]}\left(  x,0;\lambda
\right)  $ of order $\alpha$, \cite{Mah1}.

\item[g)] Taking $A_{q}(t)=\left(  \frac{2^{m}t^{m}}{\lambda e_{q}\left(
t\right)  +T_{m-1,q}\left(  t\right)  }\right)  ^{\alpha}$, leads to obtain
the family of $q$-Apostol-Genocchi polynomials $G_{n,q}^{[m-1,\alpha]}\left(
x,0;\lambda\right)  $ of order $\alpha$, \cite{Mah1}.

\item[h)] Taking $A_{q}(t)=H_{q}(t)=\sum\limits_{n=0}^{\infty}(-1)^{n}%
q^{n(n-1)}\frac{t^{2n}}{[2n]!!}$, leads to obtain the family of $q$-Hermite
polynomials $H_{n,q}\left(  x\right)  $.
\end{itemize}

Later, in 1982, Srivastava specified more characterizations of the family of
$q$-Appell polynomials, \cite{Sri}. Over the past decades, $q$-Appell
polynomials have been studied from different aspects in \cite{Sharma},
\cite{Ernst}, using different methods such as operator algebra their
properties are found in \cite{Lou}. Also, recently, the $q$-difference
equations satisfied by sequence of $q$-Appell polynomials have been derived by
Mahmudov, \cite{Mah3}. In this paper, inspired by the Costabile et al.'s
algebraic approach for defining Bernoulli polynomials as well as Appell
polynomials, for the first time, we introduce a determinantal definition of
the well known family of $q$-Appell polynomials, \cite{Costabile1},
\cite{Costabile2}. This new algebraic definition, not only allows us to
benefit from algebraic properties of determinant to prove the existing
properties of $q$-Appell polynomials more simpler, but also helps to find some
new properties. Moreover, this approach unifies all different families of
$q$-Appell polynomials some of which are mentioned in a)-h).

In the following sections, firstly we introduce the determinantal definition
of $q$-Appell polynomials and then we show that this definition matches with
the classical definitions. Next we prove some classical and new properties
related to this family in the light of the new definition and by using the
related algebraic approaches.
\section{$q$-polynomials from determinantal point of view}

Assume that $P_{n,q}(x)$ is an $n$-degree $q$-polynomial defined as follows%

\begin{equation}
\left\{
\begin{array}
[c]{l}%
P_{0,q}(x)=\frac{1}{\beta_{0}}\\
P_{n,q}(x)=\frac{(-1)^{n}}{(\beta_{0})^{n+1}}\left\vert
\begin{array}
[c]{ccccccc}%
1 & x & x^{2} & ... & ... & x^{n-1} & x^{n}\\
\beta_{0} & \beta_{1} & \beta_{2} & ... & ... & \beta_{n-1} & \beta_{n}\\
0 & \beta_{0} & \left[
\begin{array}
[c]{c}%
2\\
1
\end{array}
\right]  _{q}\beta_{1} & ... & ... & \left[
\begin{array}
[c]{c}%
n-1\\
1
\end{array}
\right]  _{q}\beta_{n-2} & \left[
\begin{array}
[c]{c}%
n\\
1
\end{array}
\right]  _{q}\beta_{n-1}\\
0 & 0 & \beta_{0} & ... & ... & \left[
\begin{array}
[c]{c}%
n-1\\
2
\end{array}
\right]  _{q}\beta_{n-3} & \left[
\begin{array}
[c]{c}%
n\\
2
\end{array}
\right]  _{q}\beta_{n-2}\\
\vdots &  &  & \ddots &  & \vdots & \vdots\\
\vdots &  &  &  & \ddots & \vdots & \vdots\\
0 & ... & ... & ... & 0 & \beta_{0} & \left[
\begin{array}
[c]{c}%
n\\
n-1
\end{array}
\right]  _{q}\beta_{1}%
\end{array}
\right\vert
\end{array}
\right.  , \label{4}%
\end{equation}
where $\beta_{0},\beta_{1},...,\beta_{n}\in\mathbb{R}\emph{,}$ $\beta_{0}%
\neq0,\ $ $n=1,2,3,...$

Then we can obtain the following results.

\begin{lemma}
\label{Lemma1}Suppose that $A_{n\times n}(x)$ is a matrix including elements
$a_{ij}(x)$ which are first order $q$-differentiable functions of variable
$x$. Then the $q$-derivative of $\det(A_{n\times n}(x))$ can be calculated by
the following formula.

\begin{multline}
D_{q,x}(\det(A_{n\times n}(x)))=D_{q,x}(\left\vert a_{ij}(x)\right\vert
)\\
=\sum_{i=1}^{n}\left\vert
\begin{tabular}
[c]{llll}%
$a_{11}(qx)$ & $a_{12}(qx)$ & $\ldots$ & $a_{1n}(qx)$\\
$\vdots$ & $\vdots$ & $\ddots$ & $\vdots$\\
$a_{i-1,1}(qx)$ & $a_{i-1,2}(qx)$ & $\ldots$ & $a_{i-1,n}(qx)$\\
$D_{q,x}(a_{i1}(x))$ & $D_{q,x}(a_{i2}(x))$ & $\ldots$ & $D_{q,x}(a_{in}%
(x))$\\
$a_{i+1,1}(x)$ & $a_{i+1,2}(x)$ & $\ldots$ & $a_{i+1,n}(x)$\\
$\vdots$ & $\vdots$ & $\ddots$ & $\vdots$\\
$a_{n1}(x)$ & $a_{n2}(x)$ & $\ldots$ & $a_{nn}(x)$%
\end{tabular}
\ \ \ \right\vert . \label{5}%
\end{multline}
\end{lemma}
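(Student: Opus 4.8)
The plan is to reduce the $q$-derivative of a determinant to the classical (Leibniz) expansion and then apply the $q$-product rule from Proposition c) one factor at a time. First I would write $\det(A_{n\times n}(x)) = \sum_{\sigma \in S_n} \operatorname{sgn}(\sigma) \prod_{i=1}^{n} a_{i\sigma(i)}(x)$, using the standard permutation formula, which is purely algebraic and valid over any commutative ring, in particular for $q$-differentiable functions. The $q$-derivative $D_{q,x}$ is linear by Proposition b), so it suffices to compute $D_{q,x}$ of each monomial term $\prod_{i=1}^{n} a_{i\sigma(i)}(x)$.

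The key step is an $n$-fold iteration of the $q$-Leibniz rule $D_{q,x}(f(x)g(x)) = f(qx)D_{q,x}g(x) + g(x)D_{q,x}f(x)$ (Proposition c). Applying it repeatedly to the product $a_{1\sigma(1)}(x)\, a_{2\sigma(2)}(x)\cdots a_{n\sigma(n)}(x)$, grouping so that at the $i$-th stage the $q$-derivative lands on the $i$-th factor, yields
\[
D_{q,x}\Big(\prod_{i=1}^{n} a_{i\sigma(i)}(x)\Big)
= \sum_{i=1}^{n}\Big(\prod_{j<i} a_{j\sigma(j)}(qx)\Big)\, D_{q,x}\big(a_{i\sigma(i)}(x)\big)\Big(\prod_{j>i} a_{j\sigma(j)}(x)\Big).
\]
Here the factors with smaller row index get evaluated at $qx$ because they sit "to the left" of the differentiated factor in the nested product rule, while those with larger index remain at $x$. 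I would verify this formula by induction on $n$: the base case $n=1$ is trivial, and the inductive step peels off the last factor $a_{n\sigma(n)}(x)$, applies Proposition c) once, and invokes the induction hypothesis on the remaining product of $n-1$ factors (noting that replacing $x$ by $qx$ throughout that product commutes with the induction).

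The final step is to reassemble. Summing the displayed identity over $\sigma \in S_n$ with signs and interchanging the order of summation $\sum_\sigma \sum_i = \sum_i \sum_\sigma$, the $i$-th inner sum $\sum_{\sigma}\operatorname{sgn}(\sigma)\big(\prod_{j<i} a_{j\sigma(j)}(qx)\big) D_{q,x}(a_{i\sigma(i)}(x))\big(\prod_{j>i} a_{j\sigma(j)}(x)\big)$ is exactly the Leibniz expansion of the determinant of the matrix whose rows $1,\dots,i-1$ are $(a_{jk}(qx))_k$, whose row $i$ is $(D_{q,x}a_{ik}(x))_k$, and whose rows $i+1,\dots,n$ are $(a_{jk}(x))_k$ — which is the $i$-th term on the right-hand side of \eqref{5}. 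Summing over $i$ gives the claim.

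The main obstacle is bookkeeping rather than conceptual: one must track carefully which rows acquire the argument $qx$. The asymmetry of the $q$-Leibniz rule (only the first factor is shifted to $qx$) forces the rows \emph{above} the differentiated row — and only those — to be evaluated at $qx$; getting this direction consistent with the grouping order in the iterated product rule is the one place an error could creep in. Once the induction in the second step is stated with the correct evaluation points, the rest is a routine reindexing of the permutation sum.
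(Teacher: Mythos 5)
Your proof is correct. Note, however, that the paper itself offers no real argument here: its ``proof'' of Lemma \ref{Lemma1} is the single sentence that the result follows by induction on $n$, presumably meaning induction on the matrix dimension via a cofactor (row) expansion. You instead expand $\det(A_{n\times n}(x))$ as the signed permutation sum $\sum_{\sigma}\operatorname{sgn}(\sigma)\prod_i a_{i\sigma(i)}(x)$, prove an iterated $q$-Leibniz rule
\[
D_{q,x}\Bigl(\prod_{i=1}^{n} f_i(x)\Bigr)=\sum_{i=1}^{n}\Bigl(\prod_{j<i} f_j(qx)\Bigr)D_{q,x}\bigl(f_i(x)\bigr)\Bigl(\prod_{j>i} f_j(x)\Bigr)
\]
by induction on the number of factors, and then reassemble the double sum into the $n$ determinants on the right of \eqref{5}. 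This is a genuinely different (and fully explicit) decomposition, and it has the advantage of making transparent exactly why the rows \emph{above} the differentiated row acquire the argument $qx$ while those below stay at $x$ --- the one bookkeeping point where the asymmetry of $D_{q}(fg)=f(qx)D_{q}g+g\,D_{q}f$ matters, and which a bare ``induction on $n$'' leaves implicit. Your checks are sound: the $n=2$ case reproduces Proposition (c), the inductive step peeling off the last factor is valid (the parenthetical about substituting $qx$ into the induction hypothesis is not actually needed for that grouping, but it is harmless), and the interchange of the sums over $i$ and $\sigma$ is legitimate since both are finite. In short, your argument supplies the details the paper omits, by a different but entirely correct route.
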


\begin{proof}
 The proof can be done by induction on $n$.
\end{proof}

\begin{theorem}
\label{TH0}$P_{n,q}(x)$, satisfies the following identity

\[
D_{q,x}(P_{n,q}(x))=[n]_{q}P_{n-1,q}(x),\ \ n=1,2,...\quad%
\]
\end{theorem}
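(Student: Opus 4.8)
The plan is to apply Lemma~\ref{Lemma1} directly to the determinant defining $P_{n,q}(x)$ in \eqref{4}, and then to recognize the resulting expression as $[n]_q$ times the determinant defining $P_{n-1,q}(x)$. Write $P_{n,q}(x) = \frac{(-1)^n}{(\beta_0)^{n+1}}\,\det M_n(x)$, where $M_n(x)$ is the $(n+1)\times(n+1)$ matrix in \eqref{4}; note that only its first row, $(1, x, x^2, \ldots, x^n)$, depends on $x$, all other entries being constants $\beta_j$ or $\left[\begin{smallmatrix} i \\ j\end{smallmatrix}\right]_q\beta_k$. Hence in the sum \eqref{5} every term with $i \geq 2$ has a row of $q$-derivatives of constants, which vanishes; only the $i=1$ term survives. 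Since the rows below the first are constant, replacing $x$ by $qx$ in them (as \eqref{5} prescribes for rows above index $i$, of which there are none when $i=1$) changes nothing, so
\[
D_{q,x}(\det M_n(x)) = \left\vert
\begin{array}{ccccc}
D_{q,x}(1) & D_{q,x}(x) & D_{q,x}(x^2) & \ldots & D_{q,x}(x^n)\\
\beta_0 & \beta_1 & \beta_2 & \ldots & \beta_n\\
0 & \beta_0 & \left[\begin{smallmatrix} 2 \\ 1\end{smallmatrix}\right]_q\beta_1 & \ldots & \left[\begin{smallmatrix} n \\ 1\end{smallmatrix}\right]_q\beta_{n-1}\\
\vdots & & & \ddots & \vdots\\
0 & \ldots & \ldots & 0 & \left[\begin{smallmatrix} n \\ n-1\end{smallmatrix}\right]_q\beta_1
\end{array}
\right\vert .
\]

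Next I would compute the top row: $D_{q,x}(1) = 0$ and $D_{q,x}(x^k) = [k]_q x^{k-1}$ for $k \geq 1$, so the first row becomes $(0, [1]_q, [2]_q x, [3]_q x^2, \ldots, [n]_q x^{n-1})$. Expanding the determinant along the first column (whose only nonzero entry is $\beta_0$ in position $(2,1)$, contributing a sign $(-1)^{2+1} = -1$) reduces it to $-\beta_0$ times the $n\times n$ minor obtained by deleting row $2$ and column $1$. That minor has first row $([1]_q, [2]_q x, \ldots, [n]_q x^{n-1})$ and the remaining rows are exactly the rows of $M_n(x)$ with their first column removed, i.e. rows indexed $3, \ldots, n+1$ of $M_n(x)$ restricted to columns $2, \ldots, n+1$.

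The crux is then a bookkeeping step: I would factor $[k]_q$ out of the $k$-th column of this $n\times n$ minor for $k=1,\ldots,n$, pulling out a scalar $[1]_q[2]_q\cdots[n]_q = [n]_q!$... but that overshoots, so more carefully I factor only to match $P_{n-1,q}$. The key algebraic identity, already displayed in the excerpt, is $\frac{[n]_q}{[i]_q}\left[\begin{smallmatrix} n-1 \\ i-1\end{smallmatrix}\right]_q = \left[\begin{smallmatrix} n \\ i\end{smallmatrix}\right]_q$; equivalently, dividing the $k$-th column by $[k]_q$ and multiplying overall by $[n]_q$ converts each entry $\left[\begin{smallmatrix} n \\ j\end{smallmatrix}\right]_q\beta_{n-j}$ of the last column into $[n]_q\left[\begin{smallmatrix} n-1 \\ j-1\end{smallmatrix}\right]_q\beta_{n-j}/[j]_q$ and, after re-indexing rows and columns by one, one recognizes precisely the matrix $M_{n-1}(x)$ of size $n \times n$ with top row $(1, x, \ldots, x^{n-1})$. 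Tracking the constants $(-1)^n/(\beta_0)^{n+1}$, the factor $-\beta_0$ from the cofactor expansion, and the factor $[n]_q$ from the column scaling, the prefactor becomes $[n]_q \cdot \frac{(-1)^{n-1}}{(\beta_0)^{n}}$, which is exactly $[n]_q$ times the prefactor of $P_{n-1,q}(x)$. I expect the main obstacle to be carrying out this column-rescaling and row/column re-indexing cleanly enough to see the matrix $M_{n-1}(x)$ emerge verbatim — the structural subdiagonal pattern of $M_n$ has to be checked to shift correctly under deletion of one row and one column — rather than any subtle analytic point, since Lemma~\ref{Lemma1} does all the calculus work. The cases $n=1$ (and the convention $P_{0,q}(x) = 1/\beta_0$) should be verified separately as a base check, though the general computation already covers $n \geq 1$.
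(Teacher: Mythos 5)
Your proposal is correct and follows essentially the same route as the paper: apply Lemma~\ref{Lemma1}, observe that only the first-row term survives, expand along the first column to pick up the factor $-\beta_0$, and then rescale to recognize $[n]_q P_{n-1,q}(x)$. The only place where you wave your hands --- the ``overshoot'' in factoring $[k]_q$ out of each column --- is handled in the paper by simultaneously multiplying the $i$-th row by $[i-1]_q$, so that the combined scalars telescope to exactly $\frac{[1]_q!}{[0]_q!}\cdot\frac{[2]_q}{[1]_q}\cdots\frac{[n]_q}{[n-1]_q}=[n]_q$ while the entries collapse to those of $M_{n-1}(x)$ via $\frac{[i-1]_q}{[j]_q}\left[\begin{smallmatrix} j \\ i-1\end{smallmatrix}\right]_q=\left[\begin{smallmatrix} j-1 \\ i-2\end{smallmatrix}\right]_q$.
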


\begin{proof}
Taking the $q$-derivative of determinant (\ref{4}) with respect to $x$ by
using formula(\ref{5}), given in Lemma \ref{Lemma1} , we obtain%

\begin{equation}
D_{q,x}(P_{n,q}(x))=\frac{(-1)^{n}}{(\beta_{0})^{n+1}}\left\vert
\begin{array}
[c]{ccccc}%
0 & 1 & [2]_{q}x & ... & [n]_{q}x^{n-1}\\
\beta_{0} & \beta_{1} & \beta_{2} & ... & \beta_{n}\\
0 & \beta_{0} & \left[
\begin{array}
[c]{c}%
2\\
1
\end{array}
\right]  _{q}\beta_{1} & ... & \left[
\begin{array}
[c]{c}%
n\\
1
\end{array}
\right]  _{q}\beta_{n-1}\\
0 & 0 & \beta_{0} & ... & \left[
\begin{array}
[c]{c}%
n\\
2
\end{array}
\right]  _{q}\beta_{n-2}\\
\vdots & \vdots &  & \ddots & \vdots\\
\vdots & \vdots &  &  & \vdots\\
0 & 0 & ... & ... & \left[
\begin{array}
[c]{c}%
n\\
n-1
\end{array}
\right]  _{q}\beta_{1}%
\end{array}
\right\vert ,\quad \label{6}%
\end{equation}

Expanding the determinant(\ref{6}) above along with the first column, we have%

\begin{flalign*}
D_{q,x}(P_{n,q}(x))=\frac{(-1)^{n-1}}{(\beta_{0})^{n}}\times && \\
\end{flalign*}
\begin{flalign}
\left\vert
\begin{array}
[c]{ccccccc}%
1 & [2]_{q}x & ... & ... &  & [n-1]_{q}x^{n-2} & [n]_{q}x^{n-1}\\
\beta_{0} & \left[
\begin{array}
[c]{c}%
2\\
1
\end{array}
\right]  _{q}\beta_{1} & ... & ... &  & \left[
\begin{array}
[c]{c}%
n-1\\
1
\end{array}
\right]  _{q}\beta_{n-2} & \left[
\begin{array}
[c]{c}%
n\\
1
\end{array}
\right]  _{q}\beta_{n-1}\\
0 & \beta_{0} & ... & ... &  & \left[
\begin{array}
[c]{c}%
n-1\\
2
\end{array}
\right]  _{q}\beta_{n-3} & \left[
\begin{array}
[c]{c}%
n\\
2
\end{array}
\right]  _{q}\beta_{n-2}\\
\vdots & \vdots & \vdots & \vdots &  & \vdots & \vdots\\
0 & ... & ... &  &  & \beta_{0} & \left[
\begin{array}
[c]{c}%
n\\
n-1
\end{array}
\right]  _{q}\beta_{1}%
\end{array}
\right\vert . \label{7}%
\end{flalign}\\

Now, considering the fact that%

\[
\frac{\lbrack i-1]_{q}}{[j]_{q}}\left[
\begin{array}
[c]{c}%
j\\
i-1
\end{array}
\right]  _{q}=\frac{[i-1]_{q}[j]_{q}!}{[j]_{q}[i-1]_{q}![j-i+1]_{q}}%
=\frac{[j-1]_{q}!}{[i-2]_{q}![j-i+1]_{q}}=\left[
\begin{array}
[c]{c}%
j-1\\
i-2
\end{array}
\right]  _{q},
\]

and multiplying the $j^{th}$ column of the determinant(\ref{7}) by $\frac
{1}{[j]_{q}}$, as well as the $i^{th}$ row by $[i-1]_{q}$ we obtain%

\begin{align*}
D_{q,x}(P_{n,q}(x))=\frac{(-1)^{n-1}}{(\beta_{0})^{n}}\times\frac
{\lbrack1]_{q}!}{[0]_{q}!}\times\frac{\lbrack2]_{q}}{[1]_{q}}\times
...\times\frac{\lbrack n]_{q}}{[n-1]_{q}}\times
\end{align*}
\\
\begin{equation}
\left\vert
\begin{array}
[c]{cccccc}%
1 & x & ... & ... & x^{n-2} & x^{n-1}\\
\beta_{0} & \beta_{1} & ... & ... & \beta_{n-2} & \beta_{n-1}\\
0 & \beta_{0} & ... & ... & \left[
\begin{array}
[c]{c}%
n-2\\
1
\end{array}
\right]  _{q}\beta_{n-3} & \left[
\begin{array}
[c]{c}%
n-1\\
1
\end{array}
\right]  _{q}\beta_{n-2}\\
\vdots &  & \vdots & \vdots & \vdots & \vdots\\
0 & 0 & ... &  & \beta_{0} & \left[
\begin{array}
[c]{c}%
n-1\\
n-2
\end{array}
\right]  _{q}\beta_{1}%
\end{array}
\right\vert , \label{8}%
\end{equation}

which is exactly the desired result.
\end{proof}

\begin{theorem}
\label{TH1}The $q$-polynomials $P_{n,q}(x)$, defined in (\ref{6}), can be
expressed as
\end{theorem}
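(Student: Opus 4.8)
The statement I expect is the $q$-analogue of the Costabile recurrence for Appell polynomials: the determinantal polynomial defined by \eqref{4} satisfies
\[
\beta_{0}\,P_{n,q}(x)=x^{n}-\sum_{k=0}^{n-1}\left[\begin{smallmatrix}n\\k\end{smallmatrix}\right]_{q}\beta_{n-k}\,P_{k,q}(x),\qquad n=1,2,\dots ,
\]
so that $P_{n,q}(x)$ is exhibited through the lower--degree members of the family and the monomial $x^{n}$. (Should the intended identity instead be the explicit expansion $P_{n,q}(x)=\sum_{k=0}^{n}\left[\begin{smallmatrix}n\\k\end{smallmatrix}\right]_{q}P_{n-k,q}(0)\,x^{k}$, it follows at once from Theorem \ref{TH0} by successive Jackson integration, exactly as \eqref{1'} was obtained from \eqref{1} in the Introduction; so in any case the work is to connect \eqref{4} with a recurrence of this type.)

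The cleanest route is to recover \eqref{4} from Cramer's rule. Consider the linear system
\[
\sum_{k=0}^{j}\left[\begin{smallmatrix}j\\k\end{smallmatrix}\right]_{q}\beta_{j-k}\,P_{k,q}(x)=x^{j},\qquad j=0,1,\dots ,n ,
\]
in the unknowns $P_{0,q}(x),\dots ,P_{n,q}(x)$: its coefficient matrix is lower triangular with every diagonal entry equal to $\beta_{0}\neq0$, hence invertible with determinant $\beta_{0}^{n+1}$, the solution is unique, and the equation for $j=n$ is precisely the recurrence to be proved. Solving for the last unknown, Cramer's rule gives $P_{n,q}(x)=\det(C)/\beta_{0}^{n+1}$, where $C$ is the coefficient matrix with its last column replaced by $(1,x,\dots ,x^{n})^{\mathrm T}$. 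Transposing and then cyclically moving the monomial row of $C^{\mathrm T}$ from the bottom to the top — $n$ row transpositions, hence a factor $(-1)^{n}$ — turns $C^{\mathrm T}$ into exactly the matrix displayed in \eqref{4}. Thus $P_{n,q}(x)=\frac{(-1)^{n}}{\beta_{0}^{n+1}}$ times that determinant, i.e. the determinantal $P_{n,q}(x)$ solves the triangular system, and the recurrence follows.

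A second, self--contained proof is by induction on $n$, expanding the $(n+1)\times(n+1)$ determinant in \eqref{4} along its last column. The entry $x^{n}$ in the top row multiplies a minor that is triangular with diagonal $\beta_{0},\dots ,\beta_{0}$; together with the global factor $\tfrac{(-1)^{n}}{\beta_{0}^{n+1}}$ and the cofactor sign this gives the term $x^{n}/\beta_{0}$. Each of the remaining last--column entries $\left[\begin{smallmatrix}n\\i-1\end{smallmatrix}\right]_{q}\beta_{n-i+1}$ multiplies a minor which, after extracting $1/[j]_{q}$ from its columns and $[i-1]_{q}$ from its rows — precisely the manipulation carried out between \eqref{7} and \eqref{8} — is by the induction hypothesis a scalar multiple of $\beta_{0}^{k+1}P_{k,q}(x)$ for the appropriate $k$; collecting these contributions produces $-\tfrac{1}{\beta_{0}}\sum_{k=0}^{n-1}\left[\begin{smallmatrix}n\\k\end{smallmatrix}\right]_{q}\beta_{n-k}P_{k,q}(x)$.

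The routine parts are the base cases $n=0,1$ (a direct evaluation from \eqref{4}) and the elementary determinant row/column operations. The only real obstacle lies in the bookkeeping of the inductive step: one must track the Laplace cofactor signs $(-1)^{i+n}$ together with the powers of $-1$ and of $\beta_{0}$ released when a minor is rescaled into the canonical determinantal form of a lower--degree $P_{k,q}(x)$, and verify that the $q$-binomial coefficients that appear collapse — via $\frac{[n]_{q}}{[i]_{q}}\left[\begin{smallmatrix}n-1\\i-1\end{smallmatrix}\right]_{q}=\left[\begin{smallmatrix}n\\i\end{smallmatrix}\right]_{q}$ and a $q$-Pascal type contraction — to exactly $\left[\begin{smallmatrix}n\\k\end{smallmatrix}\right]_{q}$. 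The Cramer argument sidesteps this bookkeeping entirely, which is why I would take it as the primary proof.
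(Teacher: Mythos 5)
The theorem you were asked to prove is not the recurrence $\beta_{0}P_{n,q}(x)=x^{n}-\sum_{k<n}\left[\begin{smallmatrix}n\\k\end{smallmatrix}\right]_{q}\beta_{n-k}P_{k,q}(x)$ (that is essentially the content of the later Theorem \ref{TH4} and of relation (\ref{19})); it is the explicit expansion $P_{n,q}(x)=\sum_{j=0}^{n}\left[\begin{smallmatrix}n\\j\end{smallmatrix}\right]_{q}\alpha_{n-j}x^{j}$, where each $\alpha_{j}$ is itself given by the concrete determinant (\ref{10}). The paper proves this by Laplace expansion of (\ref{4}) along the monomial row and then a sequence of row and column rescalings that turn the $j$-th cofactor into $\left[\begin{smallmatrix}n\\j\end{smallmatrix}\right]_{q}\alpha_{n-j}$. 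Your primary (Cramer) argument is correct as far as it goes — indeed it is exactly the argument the paper itself uses to prove Theorem \ref{TH2} — but it establishes the inverse relation $x^{j}=\sum_{k}\left[\begin{smallmatrix}j\\k\end{smallmatrix}\right]_{q}\beta_{j-k}P_{k,q}(x)$ rather than the expansion of $P_{n,q}$ in powers of $x$, so on its own it does not deliver the stated theorem. Your fallback route does: Theorem \ref{TH0} plus repeated Jackson integration (or, equivalently, $q$-Taylor expansion of the degree-$n$ polynomial $P_{n,q}$) gives $P_{n,q}(x)=\sum_{j}\left[\begin{smallmatrix}n\\j\end{smallmatrix}\right]_{q}P_{n-j,q}(0)x^{j}$, which is cleaner than the paper's cofactor bookkeeping. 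To close the gap with the theorem as actually stated you must still identify $P_{j,q}(0)$ with the determinant $\alpha_{j}$ of (\ref{10}); this is a one-line evaluation — set $x=0$ in (\ref{4}), so the first row becomes $(1,0,\dots,0)$, and expand along it to obtain precisely $\frac{(-1)^{j}}{\beta_{0}^{j+1}}$ times the minor defining $\alpha_{j}$ — which is exactly what the paper's Corollary following Theorem \ref{TH1} records. With that sentence added, your alternative proof is complete and arguably preferable to the paper's.
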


\begin{equation}
P_{n,q}(x)=\sum_{i=0}^{n}\left[
\begin{array}
[c]{c}%
n\\
j
\end{array}
\right]  _{q}\alpha_{n-j}x^{j}, \label{9}%
\end{equation}

where%

\begin{equation}
\left\{
\begin{array}
[c]{l}%
\alpha_{0}=\frac{1}{\beta_{0}}\\
\alpha_{j}=\frac{(-1)^{j}}{(\beta_{0})^{j+1}}\left\vert
\begin{array}
[c]{ccccccc}%
\beta_{0} & \beta_{1} & \beta_{2} & ... & ... & \beta_{j-1} & \beta_{j}\\
0 & \beta_{0} & \left[
\begin{array}
[c]{c}%
2\\
1
\end{array}
\right]  _{q}\beta_{1} & ... & ... & \left[
\begin{array}
[c]{c}%
j-1\\
1
\end{array}
\right]  _{q}\beta_{j-2} & \left[
\begin{array}
[c]{c}%
j\\
1
\end{array}
\right]  _{q}\beta_{j-1}\\
0 & 0 & \beta_{0} & ... & ... & \left[
\begin{array}
[c]{c}%
j-1\\
2
\end{array}
\right]  _{q}\beta_{j-3} & \left[
\begin{array}
[c]{c}%
j\\
2
\end{array}
\right]  _{q}\beta_{j-2}\\
\vdots & \vdots & \vdots & \ddots &  & \vdots & \vdots\\
\vdots & \vdots & \vdots &  & \ddots & \vdots & \vdots\\
0 & ... & ... & ... & 0 & \beta_{0} & \left[
\begin{array}
[c]{c}%
j\\
j-1
\end{array}
\right]  _{q}\beta_{1}%
\end{array}
\right\vert
\end{array}
\right.. \label{10}%
\end{equation}

\begin{proof}
Expanding the determinant(\ref{4}) along the first row, we obtain%

\begin{align}
P_{n,q}(x)  &  =\frac{(-1)^{n+2}}{(\beta_{0})^{n+1}}\left\vert
\begin{array}
[c]{cccccc}%
\beta_{1} & \beta_{2} & ... & ... & \beta_{n-1} & \beta_{n}\\ \nonumber
\beta_{0} & \left[
\begin{array}
[c]{c}%
2\\
1
\end{array}
\right]  _{q}\beta_{1} & ... & ... & \left[
\begin{array}
[c]{c}%
n-1\\
1
\end{array}
\right]  _{q}\beta_{n-2} & \left[
\begin{array}
[c]{c}%
n\\
1
\end{array}
\right]  _{q}\beta_{n-1}\\
0 & \beta_{0} & ... & ... & \left[
\begin{array}
[c]{c}%
n-1\\
2
\end{array}
\right]  _{q}\beta_{n-3} & \left[
\begin{array}
[c]{c}%
n\\
2
\end{array}
\right]  _{q}\beta_{n-2}\\
\vdots & \vdots & \ddots &  & \vdots & \vdots\\\nonumber
\vdots & \vdots &  & \ddots & \vdots & \vdots\\\nonumber
0 & ... & ... & 0 & \beta_{0} & \left[
\begin{array}
[c]{c}%
n\\
n-1
\end{array}
\right]  _{q}\beta_{1}%
\end{array}
\right\vert \\\nonumber
& +\frac{(-1)^{n+3}}{(\beta_{0})^{n+1}}x\left\vert
\begin{array}
[c]{cccccc}%
\beta_{0} & \beta_{2} & ... & ... & \beta_{n-1} & \beta_{n}\\\nonumber
0 & \left[
\begin{array}
[c]{c}%
2\\
1
\end{array}
\right]  _{q}\beta_{1} & ... & ... & \left[
\begin{array}
[c]{c}%
n-1\\
1
\end{array}
\right]  _{q}\beta_{n-2} & \left[
\begin{array}
[c]{c}%
n\\
1
\end{array}
\right]  _{q}\beta_{n-1}\\\nonumber
0 & \beta_{0} & ... & ... & \left[
\begin{array}
[c]{c}%
n-1\\\nonumber
2
\end{array}
\right]  _{q}\beta_{n-3} & \left[
\begin{array}
[c]{c}%
n\\
2
\end{array}
\right]  _{q}\beta_{n-2}\\
\vdots & \vdots & \ddots &  & \vdots & \vdots\\\nonumber
\vdots & \vdots &  & \ddots & \vdots & \vdots\\\nonumber
0 & ... & ... & 0 & \beta_{0} & \left[
\begin{array}
[c]{c}%
n\\
n-1
\end{array}
\right]  _{q}\beta_{1}%
\end{array}
\right\vert \\
&  +...+\frac{(-1)^{2n+2}}{(\beta_{0})^{n+1}}x^{n}\left\vert
\begin{array}
[c]{cccccc}%
\beta_{0} & \beta_{1} & \beta_{2} & ... & ... & \beta_{n-1}\\
0 & \beta_{0} & \left[
\begin{array}
[c]{c}%
2\\
1
\end{array}
\right]  _{q}\beta_{1} & ... & ... & \left[
\begin{array}
[c]{c}%
n-1\\
1
\end{array}
\right]  _{q}\beta_{n-2}\\
0 & 0 & \beta_{0} & ... & ... & \left[
\begin{array}
[c]{c}%
n-1\\
2
\end{array}
\right]  _{q}\beta_{n-3}\\
\vdots &  &  & \ddots &  & \vdots\\
\vdots &  &  &  & \ddots & \vdots\\
0 & ... & ... & ... & 0 & \beta_{0}%
\end{array}
\right\vert .
\end{align}

Clearly, according to the given definition for $\alpha_{i}$ in (\ref{10}), the
first determinant leads to obtain $\alpha_{n}$, which is the coefficient of
$x^{0}$. Also, the last determinant, which is the determinant of an upper
triangular $n\times n$ matrix, will lead to obtain the coefficient of $x^{n}$
as follows%

\[
\alpha_{0}=\frac{(-1)^{2n+2}}{(\beta_{0})^{n+1}}(\beta_{0})^{n}%
=\frac{1}{\beta_{0}}.
\]

To calculate the coefficient of $x^{j}$ for $0<j<n$, consider the following determinant%

\begin{multline*}
 =\frac{(-1)^{n}}{(\beta_{0})^{n+1}}(-1)^{j+2}\times\\
\left\vert
\begin{array}
[c]{ccccccc}%
\beta_{0} & \beta_{1} & ... & \beta_{j-1} & \beta_{j+1} & ... & \beta_{n}\\
0 & \beta_{0} & ... & \left[
\begin{array}
[c]{c}%
j-1\\
1
\end{array}
\right]  _{q}\beta_{j-2} & \left[
\begin{array}
[c]{c}%
j+1\\
1
\end{array}
\right]  _{q}\beta_{j} & ... & \left[
\begin{array}
[c]{c}%
n\\
1
\end{array}
\right]  _{q}\beta_{n-1}\\
0 & 0 &  & \left[
\begin{array}
[c]{c}%
j-1\\
2
\end{array}
\right]  _{q}\beta_{j-3} & \left[
\begin{array}
[c]{c}%
j+1\\
2
\end{array}
\right]  _{q}\beta_{j-1} & ... & \left[
\begin{array}
[c]{c}%
n\\
2
\end{array}
\right]  _{q}\beta_{n-2}\\
\vdots & \vdots &  & \vdots & \vdots &  & \vdots\\
0 & \ldots &  & \beta_{0} & \left[
\begin{array}
[c]{c}%
j+1\\
j-1
\end{array}
\right]  _{q}\beta_{2} & \ldots & \left[
\begin{array}
[c]{c}%
n\\
2
\end{array}
\right]  _{q}\beta_{n-j-1}\\
\vdots & \ddots &  & 0 & \left[
\begin{array}
[c]{c}%
j+1\\
j
\end{array}
\right]  _{q}\beta_{1} & \ldots & \left[
\begin{array}
[c]{c}%
n\\
j
\end{array}
\right]  _{q}\beta_{n-j}\\
\vdots &  &  & \vdots & \vdots & \ddots & \vdots\\
0 & 0 &  & 0 & 0 & \ldots & \left[
\begin{array}
[c]{c}%
n\\
n-1
\end{array}
\right]  _{q}\beta_{1}%
\end{array}
\right\vert \\
\end{multline*}
\begin{align*}
&  =\frac{(-1)^{n+j}}{(\beta_{0})^{n+1}}(\beta_{0})^{j}\left\vert
\begin{array}
[c]{cccc}%
\left[
\begin{array}
[c]{c}%
j+1\\
j
\end{array}
\right]  _{q}\beta_{1} & \ldots & \left[
\begin{array}
[c]{c}%
n-1\\
j
\end{array}
\right]  _{q}\beta_{n-j-1} & \left[
\begin{array}
[c]{c}%
n\\
j
\end{array}
\right]  _{q}\beta_{n-j}\\
\beta_{0} &  &  & \\
&  &  & \\
\vdots & \ddots & \vdots & \vdots\\
0 & \ldots & \beta_{0} & \left[
\begin{array}
[c]{c}%
n\\
n-1
\end{array}
\right]  _{q}\beta_{1}%
\end{array}
\right\vert .
\end{align*}
Now multiplying the first column of the last determinant by $\frac{1}{\left[
\begin{array}
[c]{c}%
j+1\\
j
\end{array}
\right]  _{q}}$, we obtain%

\[
=\frac{(-1)^{n+j}}{(\beta_{0})^{n-j+1}}\times\frac{1}{\left[
\begin{array}
[c]{c}%
j+1\\
j
\end{array}
\right]  _{q}}\left\vert
\begin{array}
[c]{cccc}%
\beta_{1} & \left[
\begin{array}
[c]{c}%
j+2\\
j
\end{array}
\right]  _{q}\beta_{2} & \ldots & \left[
\begin{array}
[c]{c}%
n\\
j
\end{array}
\right]  _{q}\beta_{n-j}\\
\frac{1}{\left[
\begin{array}
[c]{c}%
j+1\\
j
\end{array}
\right]  _{q}}\beta_{0} & \left[
\begin{array}
[c]{c}%
j+2\\
j+1
\end{array}
\right]  _{q}\beta_{1} & \ldots & \left[
\begin{array}
[c]{c}%
n\\
j+1
\end{array}
\right]  _{q}\beta_{n-j-1}\\
& \beta_{0} &  & \\
&  &  & \\
\vdots &  & \ddots & \vdots\\
0 &  & \ldots & \left[
\begin{array}
[c]{c}%
n\\
n-1
\end{array}
\right]  _{q}\beta_{1}%
\end{array}
\right\vert .
\]

Further similar calculations to get coefficients $1$ for the first elements of each
column in determinant above leads to%

\begin{align*}
&  =\frac{(-1)^{n+j}}{(\beta_{0})^{n-j+1}}\times\frac{1}{\left[
\begin{array}
[c]{c}%
j+1\\
j
\end{array}
\right]  _{q}}\times\frac{1}{\left[
\begin{array}
[c]{c}%
j+2\\
j
\end{array}
\right]  _{q}}\times...\times\frac{1}{\left[
\begin{array}
[c]{c}%
n-1\\
j
\end{array}
\right]  _{q}}\times\frac{1}{\left[
\begin{array}
[c]{c}%
n\\
j
\end{array}
\right]  _{q}}\times\\
&  \left\vert
\begin{array}
[c]{ccccc}%
\beta_{1} & \beta_{2} & \ldots & \beta_{n-j-1} & \beta_{n-j}\\
\frac{1}{\left[
\begin{array}
[c]{c}%
j+1\\
j
\end{array}
\right]  _{q}}\beta_{0} & \frac{\left[
\begin{array}
[c]{c}%
j+2\\
j+1
\end{array}
\right]  _{q}}{\left[
\begin{array}
[c]{c}%
j+2\\
j
\end{array}
\right]  _{q}}\beta_{1} & \ldots & \frac{\left[
\begin{array}
[c]{c}%
n-1\\
j+1
\end{array}
\right]  _{q}}{\left[
\begin{array}
[c]{c}%
n-1\\
j
\end{array}
\right]  _{q}}\beta_{n-j-2} & \frac{\left[
\begin{array}
[c]{c}%
n\\
j+1
\end{array}
\right]  _{q}}{\left[
\begin{array}
[c]{c}%
n\\
j
\end{array}
\right]  _{q}}\beta_{n-j-1}\\
& \beta_{0} &  &  & \\
&  &  &  & \\
\vdots &  & \ddots & \vdots & \vdots\\
0 &  & \ldots & \beta_{0} & \left[
\begin{array}
[c]{c}%
n\\
n-1
\end{array}
\right]  _{q}\beta_{1}%
\end{array}
\right\vert .
\end{align*}
In order to create coefficient $1$ for the term $\beta_{0}$ placed in the
second row of the above determinant, multiply this row by $\left[
\begin{array}
[c]{c}%
j+1\\
j
\end{array}
\right]  $. As we are aware of the fact that%

\[
\frac{\left[
\begin{array}
[c]{c}%
j+2\\
j+1
\end{array}
\right]  _{q}}{\left[
\begin{array}
[c]{c}%
j+2\\
j
\end{array}
\right]  _{q}}.\left[
\begin{array}
[c]{c}%
j+1\\
j
\end{array}
\right]  _{q}=\left[
\begin{array}
[c]{c}%
2\\
1
\end{array}
\right]  _{q},
\]

and also%

\[
\frac{\left[
\begin{array}
[c]{c}%
n\\
j+1
\end{array}
\right]  _{q}}{\left[
\begin{array}
[c]{c}%
n\\
j
\end{array}
\right]  _{q}}\left[
\begin{array}
[c]{c}%
j+1\\
j
\end{array}
\right]  _{q}=\left[
\begin{array}
[c]{c}%
n-j\\
1
\end{array}
\right]  _{q}.
\]

Thus we have%
\begin{align*}
&  =\frac{(-1)^{n+j}}{(\beta_{0})^{n-j+1}}\times\frac{1}{\left[
\begin{array}
[c]{c}%
j+2\\
j
\end{array}
\right]  _{q}}\times...\times\frac{1}{\left[
\begin{array}
[c]{c}%
n-1\\
j
\end{array}
\right]  _{q}}\times\frac{1}{\left[
\begin{array}
[c]{c}%
n\\
j
\end{array}
\right]  _{q}}\\
&  \times\left\vert
\begin{array}
[c]{ccccc}%
\beta_{1} & \beta_{2} & \ldots & \beta_{n-j-1} & \beta_{n-j}\\
\beta_{0} & \left[
\begin{array}
[c]{c}%
2\\
1
\end{array}
\right]  _{q}\beta_{1} & \ldots & \left[
\begin{array}
[c]{c}%
n-j-1\\
1
\end{array}
\right]  _{q}\beta_{n-j-2} & \left[
\begin{array}
[c]{c}%
n-j\\
1
\end{array}
\right]  _{q}\beta_{n-j-1}\\
& \beta_{0} &  &  & \\
&  &  &  & \\
\vdots &  & \ddots & \vdots & \vdots\\
0 &  & \ldots & \beta_{0} & \left[
\begin{array}
[c]{c}%
n\\
n-1
\end{array}
\right]  _{q}\beta_{1}%
\end{array}
\right\vert .
\end{align*}
We continue this method for each row. As the number of coefficients in
\[
\frac{1}{\left[
\begin{array}
[c]{c}%
j+1\\
j
\end{array}
\right]  _{q}}\times\frac{1}{\left[
\begin{array}
[c]{c}%
j+2\\
j
\end{array}
\right]  _{q}}\times...\times\frac{1}{\left[
\begin{array}
[c]{c}%
n-1\\
j
\end{array}
\right]  _{q}}\times\frac{1}{\left[
\begin{array}
[c]{c}%
n\\
j
\end{array}
\right]  _{q}},%
\]
is $n-j$, so it is equal to the number of rows. Moreover, in each step one of
the coefficients above will be cancelled by the corresponding inverse which
will be multiplied later by each row. Therefore, we are sure that at the end
we obtain%

\begin{align*}
&  =\frac{(-1)^{n+j}}{(\beta_{0})^{n-j+1}}\left\vert
\begin{array}
[c]{ccccc}%
\beta_{1} & \beta_{2} & \ldots & \beta_{n-j-1} & \beta_{n-j}\\
\beta_{0} & \left[
\begin{array}
[c]{c}%
2\\
1
\end{array}
\right]  _{q}\beta_{1} & \ldots & \left[
\begin{array}
[c]{c}%
n-j-1\\
1
\end{array}
\right]  _{q}\beta_{n-j-2} & \left[
\begin{array}
[c]{c}%
n-j\\
1
\end{array}
\right]  _{q}\beta_{n-j-1}\\
& \beta_{0} &  &  & \\
&  &  &  & \\
\vdots &  & \ddots & \vdots & \vdots\\
0 &  & \ldots & \beta_{0} & \left[
\begin{array}
[c]{c}%
n\\
n-1
\end{array}
\right]  _{q}\beta_{1}%
\end{array}
\right\vert \\
&  =\alpha_{n-j},
\end{align*}

whence the result.
\end{proof}

\begin{corollary}
The following identity hods for the $q$polynomials $P_{n,q}(x)$%

\begin{equation}
P_{n,q}(x)=\sum_{j=0}^{n}\left[
\begin{array}
[c]{c}%
n\\
j
\end{array}
\right]  _{q}P_{n-j,q}(0)x^{j},\text{ \ \ }n=0,1,2,.... \label{11}%
\end{equation}
\end{corollary}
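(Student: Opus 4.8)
The plan is to read off~(\ref{11}) directly from Theorem~\ref{TH1}. The single observation needed is that the constant term $P_{m,q}(0)$ equals the determinant $\alpha_{m}$ of~(\ref{10}) for every $m\in\mathbb{N}_{0}$. To see this, apply~(\ref{9}) with $n$ replaced by $m$, so that $P_{m,q}(x)=\sum_{j=0}^{m}\left[\begin{array}[c]{c}m\\j\end{array}\right]_{q}\alpha_{m-j}x^{j}$, and evaluate at $x=0$: since $x^{0}=1$ while $x^{j}=0$ for $j\geq1$, every summand with $j\geq1$ drops out and we are left with $P_{m,q}(0)=\left[\begin{array}[c]{c}m\\0\end{array}\right]_{q}\alpha_{m}=\alpha_{m}$. (For $m=0$ this is just $P_{0,q}(0)=\alpha_{0}=1/\beta_{0}$.) Equivalently, one may obtain the same identity straight from the definition~(\ref{4}): setting $x=0$ turns the top row into $(1,0,\dots,0)$, and expanding the determinant along that row reproduces precisely the determinant in~(\ref{10}) defining $\alpha_{m}$, together with the matching sign $(-1)^{m}$ and factor $(\beta_{0})^{-(m+1)}$.

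Given this, the corollary follows in one line. Returning to~(\ref{9}) for the original index $n$ and substituting $\alpha_{n-j}=P_{n-j,q}(0)$ for each $j=0,1,\dots,n$ (this is the previous paragraph with $m=n-j$), we get
\[
P_{n,q}(x)=\sum_{j=0}^{n}\left[\begin{array}[c]{c}n\\j\end{array}\right]_{q}\alpha_{n-j}\,x^{j}=\sum_{j=0}^{n}\left[\begin{array}[c]{c}n\\j\end{array}\right]_{q}P_{n-j,q}(0)\,x^{j},
\]
which is exactly~(\ref{11}); the case $n=0$ is the trivial identity $P_{0,q}(x)=P_{0,q}(0)$.

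I do not expect any real obstacle here: once Theorem~\ref{TH1} is in hand, this is pure reindexing. The only mildly technical point is the identification $P_{m,q}(0)=\alpha_{m}$, which I would present via~(\ref{9}) as above rather than through the cofactor expansion, so as to keep the signs and powers of $\beta_{0}$ implicit. If one preferred an argument not relying on the closed form~(\ref{9}) at all, an alternative is induction on $n$ using Theorem~\ref{TH0}: apply $D_{q,x}$ to the proposed right-hand side, use $D_{q,x}x^{j}=[j]_{q}x^{j-1}$ together with the identity $\frac{[n]_{q}}{[j]_{q}}\left[\begin{array}[c]{c}n-1\\j-1\end{array}\right]_{q}=\left[\begin{array}[c]{c}n\\j\end{array}\right]_{q}$ established in the Introduction to match $[n]_{q}P_{n-1,q}(x)$ by the inductive hypothesis, and then pin down the constant of integration by evaluating both sides at $x=0$. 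The determinantal route above is shorter, so that is the one I would use.
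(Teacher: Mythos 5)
Your proof is correct and takes essentially the same approach as the paper: both reduce the corollary to the identification $P_{m,q}(0)=\alpha_{m}$ and then substitute into~(\ref{9}). The paper establishes that identification by setting $x=0$ in~(\ref{4}) and expanding the resulting determinant (the cofactor route you mention as an alternative), whereas your primary route---evaluating~(\ref{9}) at $x=0$---is, if anything, cleaner and equally valid since Theorem~\ref{TH1} is already in hand.
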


\begin{proof}
According to the definition(\ref{4}), for $j=0,1,...,n$, $P_{j,q}%
(x)=\alpha_{j}$, since
\end{proof}

\begin{align*}
P_{j,q}(0)  & =\frac{(-1)^{j}}{(\beta_{0})^{j+1}}\times \\
& \left\vert
\begin{array}
[c]{ccccccc}%
1 & 0 & 0 & ... & ... & 0 & 0\\
\beta_{0} & \beta_{1} & \beta_{2} & ... & ... & \beta_{j-1} & \beta_{j}\\
0 & \beta_{0} & \left[
\begin{array}
[c]{c}%
2\\
1
\end{array}
\right]  _{q}\beta_{1} & ... & ... & \left[
\begin{array}
[c]{c}%
j-1\\
1
\end{array}
\right]  _{q}\beta_{j-2} & \left[
\begin{array}
[c]{c}%
j\\
1
\end{array}
\right]  _{q}\beta_{j-1}\\
0 & 0 & \beta_{0} & ... & ... & \left[
\begin{array}
[c]{c}%
j-1\\
2
\end{array}
\right]  _{q}\beta_{j-3} & \left[
\begin{array}
[c]{c}%
j\\
2
\end{array}
\right]  _{q}\beta_{j-2}\\
\vdots &  &  & \ddots &  & \vdots & \vdots\\
\vdots &  &  &  & \ddots & \vdots & \vdots\\
0 & ... & ... & ... & 0 & \beta_{0} & \left[
\begin{array}
[c]{c}%
j\\
j-1
\end{array}
\right]  _{q}\beta_{1}%
\end{array}
\right\vert \text{ \ }\\
\end{align*}
\begin{align*}
&  =\frac{(-1)^{j}}{(\beta_{0})^{j+1}}\left\vert
\begin{array}
[c]{cccccc}%
\beta_{1} & \beta_{2} & ... & ... & \beta_{j-1} & \beta_{j}\\
\beta_{0} & \left[
\begin{array}
[c]{c}%
2\\
1
\end{array}
\right]  _{q}\beta_{1} & ... & ... & \left[
\begin{array}
[c]{c}%
j-1\\
1
\end{array}
\right]  _{q}\beta_{j-2} & \left[
\begin{array}
[c]{c}%
j\\
1
\end{array}
\right]  _{q}\beta_{j-1}\\
0 & \beta_{0} & ... & ... & \left[
\begin{array}
[c]{c}%
j-1\\
2
\end{array}
\right]  _{q}\beta_{j-3} & \left[
\begin{array}
[c]{c}%
j\\
2
\end{array}
\right]  _{q}\beta_{j-2}\\
&  & \ddots &  & \vdots & \vdots\\
&  &  & \ddots & \vdots & \vdots\\
... & ... & ... & 0 & \beta_{0} & \left[
\begin{array}
[c]{c}%
j\\
j-1
\end{array}
\right]  _{q}\beta_{1}%
\end{array}
\right\vert \\
&  =\alpha_{j}\text{.}%
\end{align*}

Replacing $P_{n-j,q}(0)$, instead of $\alpha_{n-j}$ in relation (\ref{9}),
gives the expected result.

\begin{corollary}
The following relations hold for $\alpha_{j}$s in relation (\ref{9})%

\begin{align}
\alpha_{0}  &  =\frac{1}{\beta_{0}},\label{12}\\
\alpha_{j}  &  =-\frac{1}{\beta_{0}}\sum_{i=0}^{j-1}\left[
\begin{array}
[c]{c}%
j\\
i
\end{array}
\right]  _{q}\beta_{j-i}\text{ }\alpha_{i},\text{ \ \ }j=1,2,...,n.\nonumber
\end{align}

\end{corollary}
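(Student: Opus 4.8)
The plan is to reduce the statement to the single ``$q$-binomial convolution'' identity
\[
\sum_{i=0}^{j}\left[\begin{smallmatrix}j\\ i\end{smallmatrix}\right]_{q}\beta_{j-i}\,\alpha_{i}=\begin{cases}1,&j=0,\\[2pt]0,&j\geq 1,\end{cases}
\]
for the scalars $\alpha_{i}$ of (\ref{10}); once this is known, the $j=0$ instance is just the first line of (\ref{10}), while for $j\geq 1$ isolating the $i=j$ term gives $\beta_{0}\alpha_{j}=-\sum_{i=0}^{j-1}\left[\begin{smallmatrix}j\\ i\end{smallmatrix}\right]_{q}\beta_{j-i}\alpha_{i}$, which is exactly (\ref{12}).

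To prove the convolution identity, fix $j\geq 1$ and regard $\alpha_{0},\dots,\alpha_{j}$ as the unknowns of the $(j{+}1)\times(j{+}1)$ linear system whose $k$-th equation ($k=0,\dots,j$) is $\sum_{i=0}^{k}\left[\begin{smallmatrix}k\\ i\end{smallmatrix}\right]_{q}\beta_{k-i}\alpha_{i}=\delta_{k,0}$. Its coefficient matrix $C$ is lower triangular with every diagonal entry equal to $\left[\begin{smallmatrix}k\\ k\end{smallmatrix}\right]_{q}\beta_{0}=\beta_{0}\neq 0$, so $\det C=\beta_{0}^{\,j+1}$ and the system has a unique solution. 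Applying Cramer's rule to the last unknown --- replace the last column of $C$ by $(1,0,\dots,0)^{T}$ and expand the resulting determinant along that column --- leaves exactly one nonzero cofactor, that of the entry $1$ sitting in position $(1,j{+}1)$, whose sign is $(-1)^{1+(j+1)}=(-1)^{j}$; hence $\alpha_{j}=\frac{(-1)^{j}}{\beta_{0}^{\,j+1}}\det M$, where $M$ denotes $C$ with its first row and last column deleted.

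It then remains to identify $\det M$ with the determinant printed in (\ref{10}). Writing $M$ out entrywise shows that $M$ is lower Hessenberg while the $j\times j$ array in (\ref{10}) is upper Hessenberg, and that the two are transposes of one another; since a determinant is invariant under transposition, $\alpha_{j}=\frac{(-1)^{j}}{\beta_{0}^{\,j+1}}\left\vert\cdots\right\vert$, i.e.\ the Cramer solution coincides with the $\alpha_{j}$ of (\ref{10}). Consequently the determinantal $\alpha_{j}$ satisfy every equation of the system --- in particular the $j$-th one, which is (\ref{12}) --- and the $j=0$ statement $\alpha_{0}=1/\beta_{0}$ needs nothing at all.

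I expect the transpose-and-sign bookkeeping in the Cramer step to be the only genuinely delicate point: one has to check that $C$ is triangular with $\det C=\beta_{0}^{\,j+1}$, that deleting row $1$ and column $j{+}1$ yields precisely $M$, and that $M^{T}$ is literally the array of (\ref{10}) entry by entry (the symmetry $\left[\begin{smallmatrix}k\\ i\end{smallmatrix}\right]_{q}=\left[\begin{smallmatrix}k\\ k-i\end{smallmatrix}\right]_{q}$ is occasionally convenient here). A more computational route, closer in spirit to the proof of Theorem~\ref{TH1}, is to induct on $j$ and expand the determinant of (\ref{10}) directly along its last column, with entries $\left[\begin{smallmatrix}j\\ 0\end{smallmatrix}\right]_{q}\beta_{j},\dots,\left[\begin{smallmatrix}j\\ j-1\end{smallmatrix}\right]_{q}\beta_{1}$: after the same rescalings of rows by $[\,\cdot\,]_{q}$ and columns by $\frac{1}{[\,\cdot\,]_{q}}$ used there, each cofactor minor becomes a constant times the determinant defining a lower-index $\alpha_{i}$, and summing them reproduces $-\frac{1}{\beta_{0}}\sum_{i=0}^{j-1}\left[\begin{smallmatrix}j\\ i\end{smallmatrix}\right]_{q}\beta_{j-i}\alpha_{i}$. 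Alternatively, one may first prove $\sum_{i=0}^{n}\left[\begin{smallmatrix}n\\ i\end{smallmatrix}\right]_{q}\beta_{i}\,P_{n-i,q}(x)=x^{n}$ from (\ref{9})--(\ref{11}) and then set $x=0$.
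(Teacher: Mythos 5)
Your argument is correct, but it takes a genuinely different route from the paper's. The paper proves the recurrence by brute determinant manipulation: it expands the Hessenberg determinant defining $\alpha_{j}$ in (\ref{10}) along its first row and then performs the same row/column rescalings by $[\,\cdot\,]_{q}$-factors as in the proof of Theorem~\ref{TH1} to recognize each cofactor as $\beta_{0}^{\,j-i}$ times the determinant defining a lower-index $\alpha_{i}$ --- essentially your second, ``more computational'' alternative, except along the first row rather than the last column. Your primary route instead transplants the Cramer's-rule argument that the paper itself uses for Theorem~\ref{TH2}: you characterize $(\alpha_{0},\dots,\alpha_{j})$ as the unique solution of the triangular convolution system $\sum_{i}\left[\begin{smallmatrix}k\\ i\end{smallmatrix}\right]_{q}\beta_{k-i}\alpha_{i}=\delta_{k,0}$, verify via the transpose that the Cramer formula for the last unknown reproduces (\ref{10}), and then read the recurrence off the $j$-th equation for free. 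This buys a cleaner proof (no rescaling bookkeeping, and it simultaneously establishes the convolution identity, which is the scalar analogue of (\ref{19})), at the cost of the entry-by-entry identification of the Cramer minor with the printed array --- which, as you suspected, is the one delicate spot: as printed, the array in (\ref{10}) is $j\times(j+1)$ (its first row should read $\beta_{1},\dots,\beta_{j}$, as one sees from the expansion in the proof of Theorem~\ref{TH1} and from the computation of $P_{j,q}(0)$ in the preceding corollary), and it is this corrected $j\times j$ upper Hessenberg array that is the transpose of your minor $M$. With that reading, your sign count $(-1)^{1+(j+1)}=(-1)^{j}$ and the identification $\det C=\beta_{0}^{\,j+1}$ are both right, and the proof goes through.
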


\begin{proof}
The proof is done by expanding $\alpha_{j}$ , defined in relation(\ref{10}),
along with the first row and also applying a similar technique to the
proof of theorem \ref{TH1}.
\end{proof}

\begin{theorem}
\label{TH2}Suppose that $\{A_{n,q}(x)\}$ be the sequence of $q$-Appell
polynomials with generating function $A_{q}(t)$, defined in the relations
(\ref{2}) and (\ref{3}). If $B_{0,q},$ $B_{1,q},$ $...,$ $B_{n},q,$ with
$B_{0,q}\neq0$ are the coefficients of $q$-Taylor series expansion of the
function $\frac{1}{A_{q}(t)}$ introduced in relation (\ref{0}), then $ for\quad n=0,1,2,...$ we have
\end{theorem}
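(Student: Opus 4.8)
The plan is to identify the free parameters in (\ref{4}) with the numbers $B_{i,q}$, i.e.\ to set $\beta_i=B_{i,q}$ (this is legitimate since $B_{0,q}\neq0$), and to show that the resulting determinantal polynomials $P_{n,q}(x)$ coincide with $A_{n,q}(x)$. Rather than re-expanding any determinant, I would lean on the closed form already established in Theorem~\ref{TH1}, namely (\ref{9}) (equivalently the Corollary (\ref{11})): it writes $P_{n,q}(x)=\sum_{j=0}^{n}\left[\begin{array}{c}n\\j\end{array}\right]_q\alpha_{n-j}x^{j}$, so the whole statement collapses to the purely numerical claim $\alpha_j=A_{j,q}$ for every $j$, where $\alpha_j$ is the determinant (\ref{10}) with $\beta_i=B_{i,q}$ and $A_{j,q}$ is the $q$-Appell number from (\ref{3}).

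First I would record the convolution linking the two numerical sequences. Multiplying (\ref{3}) by the $q$-Taylor expansion $\tfrac{1}{A_q(t)}=\sum_{n\ge0}B_{n,q}\tfrac{t^{n}}{[n]_q!}$ and using the $q$-Cauchy product rule $\tfrac{1}{[i]_q![n-i]_q!}=\tfrac{1}{[n]_q!}\left[\begin{array}{c}n\\i\end{array}\right]_q$, the identity $A_q(t)\cdot\tfrac{1}{A_q(t)}=1$ gives, upon comparing coefficients of $t^{n}/[n]_q!$,
\[
\sum_{i=0}^{n}\left[\begin{array}{c}n\\i\end{array}\right]_q A_{i,q}\,B_{n-i,q}=\delta_{n,0},\qquad n=0,1,2,\dots
\]
Thus $A_{0,q}B_{0,q}=1$, and isolating the $i=n$ term for $n\ge1$ yields $\sum_{i=0}^{n-1}\left[\begin{array}{c}n\\i\end{array}\right]_q A_{i,q}B_{n-i,q}=-A_{n,q}B_{0,q}$.

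Next I would induct on $j$ using the recurrence for the $\alpha_j$ supplied by the Corollary containing (\ref{12}), specialized to $\beta_i=B_{i,q}$. The base case $\alpha_0=1/B_{0,q}=A_{0,q}$ is immediate from $A_{0,q}B_{0,q}=1$. Assuming $\alpha_i=A_{i,q}$ for all $i<j$, relation (\ref{12}) gives
\begin{align*}
\alpha_j &= -\frac{1}{B_{0,q}}\sum_{i=0}^{j-1}\left[\begin{array}{c}j\\i\end{array}\right]_q B_{j-i,q}\,\alpha_i
= -\frac{1}{B_{0,q}}\sum_{i=0}^{j-1}\left[\begin{array}{c}j\\i\end{array}\right]_q B_{j-i,q}\,A_{i,q}\\
&= -\frac{1}{B_{0,q}}\bigl(-A_{j,q}B_{0,q}\bigr)=A_{j,q},
\end{align*}
the penultimate equality being precisely the $n=j$ instance of the convolution above. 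Hence $\alpha_j=A_{j,q}$ for all $j$.

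Finally, substituting $\alpha_{n-j}=A_{n-j,q}$ into (\ref{9}) (or (\ref{11})) gives $P_{n,q}(x)=\sum_{j=0}^{n}\left[\begin{array}{c}n\\j\end{array}\right]_q A_{n-j,q}\,x^{j}$, which is exactly the classical closed form (\ref{1'}) of $A_{n,q}(x)$; therefore $P_{n,q}(x)=A_{n,q}(x)$. (An alternative finish: both families obey $D_{q,x}(\cdot)=[n]_q(\cdot)$ — Theorem~\ref{TH0} versus (\ref{1}) — and agree at $x=0$ by the numerical identification, so uniqueness of an Appell sequence given its numbers forces equality; but the route through the explicit formula is the shortest.) The only point that needs care is the $q$-binomial bookkeeping: making sure the Cauchy product carries the factor $\left[\begin{array}{c}n\\i\end{array}\right]_q$ correctly, and that the truncated sum in (\ref{12}) matches the $n=j$ identity after peeling off the $i=j$ term; everything else is a one-line induction.
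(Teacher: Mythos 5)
Your proposal is correct, but it proves the theorem by a genuinely different route than the paper. The paper works from the generating-function side: it multiplies $\sum_n A_{n,q}(x)t^n/[n]_q!$ by $B_q(t)=1/A_q(t)$, compares coefficients to get $\sum_{k=0}^{n}\left[\begin{array}{c}n\\k\end{array}\right]_q B_{n-k,q}A_{k,q}(x)=x^{n}$, reads this as an infinite lower-triangular linear system in the unknowns $A_{n,q}(x)$, and solves the first $n+1$ equations by Cramer's rule; the determinant (\ref{13}) then appears after a transpose and row permutations. You instead work from the determinant side: specializing $\beta_i=B_{i,q}$ in (\ref{4}), you invoke the expansion (\ref{9}) from Theorem~\ref{TH1} and the recurrence (\ref{12}) for the $\alpha_j$, and show by induction (using the same convolution identity (\ref{17}) that the paper derives) that $\alpha_j=A_{j,q}$, whence the two polynomial families have identical coefficients. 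The paper's route is constructive -- it explains where the determinant comes from, which is the conceptual point of the theorem -- while yours is a verification that presupposes the determinantal form but is shorter and reuses the structural lemmas already proved. One small point to tighten: your last step identifies $\sum_j\left[\begin{array}{c}n\\j\end{array}\right]_q A_{n-j,q}x^{j}$ with $A_{n,q}(x)$ by citing (\ref{1'}), but (\ref{1'}) is derived in the paper from the $q$-difference definition (\ref{1}), whereas the hypothesis of Theorem~\ref{TH2} is the generating-function definition (\ref{2})--(\ref{3}); to keep the argument self-contained you should note that this expansion follows in one line from (\ref{2}) by the $q$-Cauchy product of $A_q(t)$ with $e_q(tx)$ (this is exactly the $(a\Rightarrow b)$ computation of Theorem~\ref{TH3}). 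With that remark added, your proof is complete.
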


\begin{equation}
\left\{
\begin{array}
[c]{l}%
A_{0,q}(x)=\frac{1}{B_{0,q}}\\
A_{n,q}(x)=\frac{(-1)^{n}}{(B_{0,q})^{n+1}}\times\\
\left\vert
\begin{array}
[c]{ccccccc}%
1 & x & x^{2} & ... & ... & x^{n-1} & x^{n}\\
B_{0,q} & B_{1,q} & B_{2,q} & ... & ... & B_{n-1,q} & B_{n,q}\\
0 & B_{0,q} & \left[
\begin{array}
[c]{c}%
2\\
1
\end{array}
\right]  _{q}B_{1,q} & ... & ... & \left[
\begin{array}
[c]{c}%
n-1\\
1
\end{array}
\right]  _{q}B_{n-2,q} & \left[
\begin{array}
[c]{c}%
n\\
1
\end{array}
\right]  _{q}B_{n-1,q}\\
0 & 0 & B_{0,q} & ... & ... & \left[
\begin{array}
[c]{c}%
n-1\\
2
\end{array}
\right]  _{q}B_{n-3,q} & \left[
\begin{array}
[c]{c}%
n\\
2
\end{array}
\right]  _{q}B_{n-2,q}\\
\vdots &  &  & \ddots &  & \vdots & \vdots\\
\vdots &  &  &  & \ddots & \vdots & \vdots\\
0 & ... & ... & ... & 0 & B_{0,q} & \left[
\begin{array}
[c]{c}%
n\\
n-1
\end{array}
\right]  _{q}B_{1,q}%
\end{array}
\right\vert
\end{array}
\quad\right.\label{13}%
\end{equation}

\begin{proof}
According to the relations (\ref{2}) and (\ref{3}), we have%

\begin{equation}
A_{q}(t)=\sum_{n=0}^{\infty}A_{n,q}\frac{t^{n}}{\left[  n\right]  _{q}%
!}=A_{0,q}+A_{1,q}t+A_{2,q}\frac{t^{2}}{\left[  2\right]  _{q}!}%
+...+A_{n,q}\frac{t^{n}}{\left[  n\right]  _{q}!}+..., \label{14}%
\end{equation}

and also%

\begin{equation}
A_{q}(t)e_{q}(tx)=\sum_{n=0}^{\infty}A_{n,q}(x)\frac{t^{n}}{\left[  n\right]
_{q}!}=A_{0,q}(x)+A_{1,q}(x)t+A_{2,q}(x)\frac{t^{2}}{\left[  2\right]  _{q}%
!}+...+A_{n,q}(x)\frac{t^{n}}{\left[  n\right]  _{q}!}+.... \label{15}%
\end{equation}
Let $B_{q}(t)=\frac{1}{A_{q}(t)}$. Thus, considering the hypothesis of the
theorem and also noting the definition of $q$-Taylor series expansion of
$B_{q}(t)$ at $a=0$ given in relation (\ref{0}) we have%

\begin{equation}
B_{q}(t)=B_{0,q}+B_{1,q}\frac{t}{\left[  1\right]  _{q}!}+B_{2,q}\frac{t^{2}%
}{\left[  2\right]  _{q}!}+...+B_{n,q}(x)\frac{t^{n}}{\left[  n\right]  _{q}%
!}+.... \label{16}%
\end{equation}

By using Cauchy product rule for the series production $A_{q}(t)B_{q}(t)$, we obtain%

\begin{align*}
1  &  =A_{q}(t)B_{q}(t)\\
&  =\sum_{n=0}^{\infty}A_{n,q}\frac{t^{n}}{\left[  n\right]  _{q}!}\sum
_{n=0}^{\infty}B_{n,q}\frac{t^{n}}{\left[  n\right]  _{q}!}\\
&  =\sum_{n=0}^{\infty}\sum_{k=0}^{n}\left[
\begin{array}
[c]{c}%
n\\
k
\end{array}
\right]  _{q}A_{k,q}B_{n-k,q}\frac{t^{n}}{\left[  n\right]  _{q}!}.
\end{align*}

Consequently,%

\[
\sum_{k=0}^{n}\left[
\begin{array}
[c]{c}%
n\\
k
\end{array}
\right]  _{q}A_{k,q}B_{n-k,q}=\left\{
\begin{array}
[c]{cc}%
1 & \text{ \ \ for }n=0,\\
0 & \text{ \ \ for }n>0.
\end{array}
\right.
\]

It means that%

\begin{equation}
\left\{
\begin{array}
[c]{l}%
B_{0,q}=\frac{1}{A_{0}}\\
B_{n,q}=-\frac{1}{A_{0}}(\sum_{k=1}^{n}\left[
\begin{array}
[c]{c}%
n\\
k
\end{array}
\right]  _{q}A_{k,q}B_{n-k,q}),\text{ \ \ }n=1,2,3,....
\end{array}
\right.  \label{17}%
\end{equation}

Now, multiply both sides of identity(\ref{15}) by $B_{q}(t)=\frac{1}{A_{q}%
(t)}$, and then replace $e_{q}(tx)$ by its $q$-Taylor series expansion, i. e.
$\sum\limits_{k=0}^{\infty}x^{n}\frac{t^{n}}{[n]_{q}!}$. Therefore we obtain%

\begin{align*}
\sum\limits_{k=0}^{\infty}x^{n}\frac{t^{n}}{[n]_{q}!}  &  =e_{q}(tx)\\
&  =B_{q}(t)\sum_{n=0}^{\infty}A_{n,q}(x)\frac{t^{n}}{\left[  n\right]  _{q}%
!}=\sum_{n=0}^{\infty}B_{n,q}\frac{t^{n}}{\left[  n\right]  _{q}!}\sum
_{n=0}^{\infty}A_{n,q}(x)\frac{t^{n}}{\left[  n\right]  _{q}!}.
\end{align*}

Using Cauchy product rule in the last part of relation above leads to%

\begin{equation}
\sum\limits_{k=0}^{\infty}x^{n}\frac{t^{n}}{[n]_{q}!}=\sum_{n=0}^{\infty}%
\sum_{k=0}^{n}\left[
\begin{array}
[c]{c}%
n\\
k
\end{array}
\right]  _{q}B_{n-k,q}A_{k,q}(x)\frac{t^{n}}{\left[  n\right]  _{q}!}.
\label{18}%
\end{equation}

Comparing the coefficients of $\frac{t^{n}}{[n]_{q}!}$ in both sides of
equation(\ref{18}), we have%

\begin{equation}
\sum_{k=0}^{n}\left[
\begin{array}
[c]{c}%
n\\
k
\end{array}
\right]  _{q}B_{n-k,q}A_{k,q}(x)=x^{n},\ \ n=0,1,2,.... \label{19}%
\end{equation}

Writing identity(\ref{19}) for $n=0,1,2,...$ leads to obtain the following
infinite system in the parameter $A_{n,q}(x)$%

\begin{equation}
\left\{
\begin{array}
[c]{l}%
B_{0,q}A_{0,q}(x)=1,\\
B_{1,q}A_{0,q}(x)+B_{0,q}A_{0,q}(x)=x,\\
B_{2,q}A_{0,q}(x)+\left[
\begin{array}
[c]{c}%
2\\
1
\end{array}
\right]  _{q}B_{1,q}A_{1,q}(x)+B_{0,q}A_{2,q}(x)=x^{2},\\
\vdots\\
B_{n,q}A_{0,q}(x)+\left[
\begin{array}
[c]{c}%
n\\
1
\end{array}
\right]  _{q}B_{n-1,q}A_{1,q}(x)+\ldots+B_{0,q}A_{n,q}(x)=x^{n},\\
\vdots.
\end{array}
\right.  \label{20}%
\end{equation}

As it is clear the coefficient matrix of the infinite system (\ref{20}) is
lower triangular. So this property helps us to find $A_{n,q}(x)$ by applying
Cramer rule to only the first $n+1$ equations of this system. Hence we can obtain%

\[
A_{n,q}(x)=\frac{\left\vert
\begin{array}
[c]{cccccc}%
B_{0,q} & 0 & 0 & \cdots & 0 & 1\\
B_{1,q} & B_{0,q} & 0 & \cdots & 0 & x\\
B_{2,q} & \left[
\begin{array}
[c]{c}%
2\\
1
\end{array}
\right]  _{q}B_{1,q} & B_{0,q} & \cdots & 0 & x^{2}\\
\vdots &  &  & \ddots &  & \vdots\\
B_{n-1,q} & \left[
\begin{array}
[c]{c}%
n-1\\
1
\end{array}
\right]  _{q}B_{n-2,q} & \cdots & \cdots & B_{0,q} & x^{n-1}\\
B_{n,q} & \left[
\begin{array}
[c]{c}%
n\\
1
\end{array}
\right]  _{q}B_{n-1,q} & \cdots & \cdots & \left[
\begin{array}
[c]{c}%
n\\
n-1
\end{array}
\right]  _{q}B_{1,q} & x^{n}%
\end{array}
\right\vert }{\left\vert
\begin{array}
[c]{cccccc}%
B_{0,q} & 0 & 0 & \cdots & 0 & 0\\
B_{1,q} & B_{0,q} & 0 & \cdots & 0 & 0\\
B_{2,q} & \left[
\begin{array}
[c]{c}%
2\\
1
\end{array}
\right]  _{q}B_{1,q} & B_{0,q} & \cdots & 0 & 0\\
\vdots &  &  & \ddots &  & \vdots\\
B_{n-1,q} & \left[
\begin{array}
[c]{c}%
n-1\\
1
\end{array}
\right]  _{q}B_{n-2,q} & \cdots & \cdots & B_{0,q} & 0\\
B_{n,q} & \left[
\begin{array}
[c]{c}%
n\\
1
\end{array}
\right]  _{q}B_{n-1,q} & \cdots & \cdots & \left[
\begin{array}
[c]{c}%
n\\
n-1
\end{array}
\right]  _{q}B_{1,q} & B_{0,q}%
\end{array}
\right\vert }%
\]

\[
=\frac{1}{(B_{0,q})^{n+1}}\left\vert
\begin{array}
[c]{cccccc}%
B_{0,q} & 0 & 0 & \cdots & 0 & 1\\
B_{1,q} & B_{0,q} & 0 & \cdots & 0 & x\\
B_{2,q} & \left[
\begin{array}
[c]{c}%
2\\
1
\end{array}
\right]  _{q}B_{1,q} & B_{0,q} & \cdots & 0 & x^{2}\\
\vdots &  &  & \ddots &  & \vdots\\
B_{n-1,q} & \left[
\begin{array}
[c]{c}%
n-1\\
1
\end{array}
\right]  _{q}B_{n-2,q} & \cdots & \cdots & B_{0,q} & x^{n-1}\\
B_{n,q} & \left[
\begin{array}
[c]{c}%
n\\
1
\end{array}
\right]  _{q}B_{n-1,q} & \cdots & \cdots & \left[
\begin{array}
[c]{c}%
n\\
n-1
\end{array}
\right]  _{q}B_{1,q} & x^{n}%
\end{array}
\right\vert
\]

Now, take the transpose of the last determinant and then interchange $i^{th}$
row of the obtained determinant with $i+1^{th}$ \ row, $i=1,2,...,n$. This
leads to obtain the desired result that is exactly relation(\ref{13}).
\end{proof}

\begin{theorem}
\label{TH3}The following facts are equivalent for the $q$-Appell polynomials:
\end{theorem}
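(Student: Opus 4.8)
The plan is to prove the listed characterisations pairwise equivalent by establishing a single closed cycle of implications rather than all pairs. The natural items are: \textbf{(I)} the determinantal definition (\ref{13}); \textbf{(II)} the $q$-Appell difference relation (\ref{1}), namely $D_{q,x}(A_{n,q}(x))=[n]_{q}A_{n-1,q}(x)$ with $A_{0,q}$ a nonzero constant; \textbf{(III)} the explicit expansion (\ref{1'}), $A_{n,q}(x)=\sum_{j=0}^{n}\left[\begin{array}{c}n\\j\end{array}\right]_{q}A_{n-j,q}x^{j}$ with $A_{k,q}:=A_{k,q}(0)$; and \textbf{(IV)} the generating-function description (\ref{2})--(\ref{3}), $A_{q}(t)e_{q}(tx)=\sum_{n\geq0}A_{n,q}(x)t^{n}/[n]_{q}!$ with $A_{q}(t)$ analytic and nonvanishing at $t=0$. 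I would prove \textbf{(I)}$\Rightarrow$\textbf{(II)}$\Rightarrow$\textbf{(III)}$\Rightarrow$\textbf{(IV)}$\Rightarrow$\textbf{(I)}, so that any one of them implies all the others.

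The implication \textbf{(I)}$\Rightarrow$\textbf{(II)} is precisely Theorem \ref{TH0}, with $A_{0,q}(x)=1/B_{0,q}$ read off from (\ref{13}) as the required nonzero constant. For \textbf{(II)}$\Rightarrow$\textbf{(III)} I would repeat the successive Jackson integrations carried out in the Introduction: starting from $n=1$, using linearity of $\int\cdot\,d_{q}x$ and the identity $\frac{[n]_{q}}{[i]_{q}}\left[\begin{array}{c}n-1\\i-1\end{array}\right]_{q}=\left[\begin{array}{c}n\\i\end{array}\right]_{q}$, an induction on $n$ reproduces (\ref{1'}) with $A_{k,q}=A_{k,q}(0)$. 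For \textbf{(III)}$\Rightarrow$\textbf{(IV)} I would set $A_{q}(t):=\sum_{n}A_{n,q}t^{n}/[n]_{q}!$, which is analytic and nonvanishing near $0$ because its constant term $A_{0,q}$ is nonzero, multiply by $e_{q}(tx)=\sum_{k}x^{k}t^{k}/[k]_{q}!$, and collect powers of $t$ by the $q$-Cauchy product; the coefficient of $t^{n}/[n]_{q}!$ is then $\sum_{j=0}^{n}\left[\begin{array}{c}n\\j\end{array}\right]_{q}A_{n-j,q}x^{j}$, which equals $A_{n,q}(x)$ by \textbf{(III)}.

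Finally, \textbf{(IV)}$\Rightarrow$\textbf{(I)} is the content of Theorem \ref{TH2}: putting $B_{q}(t)=1/A_{q}(t)$, analytic at $0$ since $A_{0,q}\neq0$, with $q$-Taylor coefficients $B_{n,q}$ and $B_{0,q}\neq0$, the $q$-Cauchy product produces the lower-triangular infinite system (\ref{20}), Cramer's rule on its first $n+1$ equations writes $A_{n,q}(x)$ as a quotient of determinants, and the transpose together with the cyclic row shift $i\leftrightarrow i+1$ normalises this to (\ref{13}), closing the cycle. Since every step is assembled from Theorems \ref{TH0}--\ref{TH2} and the Introduction, no part is conceptually difficult; the one thing to watch is the bookkeeping — using $e_{q}$ rather than $E_{q}$ in the Cauchy products so that the $q$-binomial coefficients align, and applying the rescaling identity $\frac{[n]_{q}}{[i]_{q}}\left[\begin{array}{c}n-1\\i-1\end{array}\right]_{q}=\left[\begin{array}{c}n\\i\end{array}\right]_{q}$ consistently through the iterated Jackson integrals and through the column and row operations on the determinants.
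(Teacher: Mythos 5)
Your proposal is correct and follows essentially the same route as the paper: a single cycle of implications built from Theorem \ref{TH0} (determinant $\Rightarrow$ $q$-derivative relation), the Jackson-integration argument of the Introduction (derivative relation $\Rightarrow$ explicit expansion), the $q$-Cauchy product (expansion $\Rightarrow$ generating function), and Theorem \ref{TH2} with Cramer's rule (generating function $\Rightarrow$ determinant). The only difference is cosmetic — you split the paper's item (a) into two separate nodes, which if anything makes the step the paper leaves implicit (that (\ref{1}) yields (\ref{1'})) more explicit.
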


\begin{enumerate}
\item[a)] $q$-Appell polynomials can be expressed by considering the relations
(\ref{1}) and (\ref{1'}).

\item[b)] $q$-Appell polynomials can be expressed by considering the relations
(\ref{2}) and (\ref{3}).

\item[c)] $q$-Appell polynomials can be expressed by considering the
determinantal relation (\ref{13}).
\end{enumerate}

\begin{proof}
$(a\Rightarrow b)$ Suppose that relations (\ref{1}) and (\ref{1'}) hold.
Construct an infinite series $\sum_{n=0}^{\infty}A_{n,q}\frac{t^{n}}{\left[
n\right]  _{q}!}$ form all constants $A_{n,q}$ used for defining $A_{n,q}(x)$
in relation (\ref{1'}). Now find the following Cauchy product%

\begin{align*}
&  \sum_{n=0}^{\infty}A_{n,q}\frac{t^{n}}{\left[  n\right]  _{q}!}e_{q}(tx)\\
&  =\sum_{n=0}^{\infty}A_{n,q}\frac{t^{n}}{\left[  n\right]  _{q}!}\sum
_{n=0}^{\infty}x^{n}\frac{t^{n}}{\left[  n\right]  _{q}!}\\
&  \sum_{n=0}^{\infty}\sum_{k=0}^{n}A_{n-k,q}x^{k}\frac{t^{n}}{\left[
n\right]  _{q}!}.
\end{align*}

From relation (\ref{1'}) we know that%

\[
\sum_{k=0}^{n}A_{n-k,q}x^{k}=A_{n,q}(x),
\]

So we find that
\[
\sum_{n=0}^{\infty}A_{n,q}\frac{t^{n}}{\left[  n\right]  _{q}!}e_{q}%
(tx)=A_{n,q}(x),
\]

whence the result.

$(b\Rightarrow c)$ The proof follows directly from Theorem \ref{TH2}.

$(c\Rightarrow a)$ The proof follows from Theorems \ref{TH0} and \ref{TH2}.
\end{proof}

As the consequence of discussion above and particularly Theorem \ref{TH3}, we
are allowed to introduce the determinantal definition of $q$-Appell
polynomials as follows

\begin{definition}
\label{Def1}$q$-Appell polynomials $\{A_{n,q}(x)\}_{n=0}^{\infty}$ can be
defined as
\end{definition}

\begin{equation}
\left\{
\begin{array}
[c]{l}%
A_{0,q}(x)=\frac{1}{B_{0,q}}\\
A_{n,q}(x)=\frac{(-1)^{n}}{(B_{0,q})^{n+1}}\times \\
\left\vert
\begin{array}
[c]{ccccccc}%
1 & x & x^{2} & ... & ... & x^{n-1} & x^{n}\\
B_{0,q} & B_{1,q} & B_{2,q} & ... & ... & B_{n-1,q} & B_{n,q}\\
0 & B_{0,q} & \left[
\begin{array}
[c]{c}%
2\\
1
\end{array}
\right]  _{q}B_{1,q} & ... & ... & \left[
\begin{array}
[c]{c}%
n-1\\
1
\end{array}
\right]  _{q}B_{n-2,q} & \left[
\begin{array}
[c]{c}%
n\\
1
\end{array}
\right]  _{q}B_{n-1,q}\\
0 & 0 & B_{0,q} & ... & ... & \left[
\begin{array}
[c]{c}%
n-1\\
2
\end{array}
\right]  _{q}B_{n-3,q} & \left[
\begin{array}
[c]{c}%
n\\
2
\end{array}
\right]  _{q}B_{n-2,q}\\
\vdots &  &  & \ddots &  & \vdots & \vdots\\
\vdots &  &  &  & \ddots & \vdots & \vdots\\
0 & ... & ... & ... & 0 & B_{0,q} & \left[
\begin{array}
[c]{c}%
n\\
n-1
\end{array}
\right]  _{q}B_{1,q}%
\end{array}
\right\vert
\end{array}
\right.  , \label{21}%
\end{equation}
where $B_{0,q},B_{1,q},B_{2,q},\ldots,B_{n,q}\in\mathbb{R},$ \ $B_{0,q}\neq0$
and $n=1,2,3,....$

\section{Basic Properties of $q$-Appell polynomials from determinantal point
of view}

In this section by using Definition \ref{Def1}, we review the basic properties
of $q$-Appell polynomials.

\begin{theorem}
\label{TH4}For $q$-Appell polynomials the following identities hold
\begin{equation}
A_{n,q}(x)=\frac{1}{B_{0,q}}(x^{n}-\sum_{k=0}^{n-1}\left[
\begin{array}
[c]{c}%
n\\
k
\end{array}
\right]  _{q}B_{n-k,q}A_{k,q}(x)),\text{ \ \ }n=1,2,3,.... \label{22}%
\end{equation}
\end{theorem}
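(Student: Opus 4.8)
The plan is to recognize that identity (\ref{22}) is merely a rearrangement of the linear relations (\ref{19}) already obtained inside the proof of Theorem \ref{TH2}, so essentially no new computation is needed; I will indicate three equivalent routes, any one of which suffices.

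The quickest route invokes Theorem \ref{TH3}: since Definition \ref{Def1} is equivalent to the generating-function description (\ref{2})--(\ref{3}), the polynomials $A_{n,q}(x)$ satisfy (\ref{19}), i.e.
\[
\sum_{k=0}^{n}\left[
\begin{array}
[c]{c}
n\\
k
\end{array}
\right]  _{q}B_{n-k,q}A_{k,q}(x)=x^{n},\qquad n=0,1,2,\dots .
\]
Splitting off the term $k=n$, for which $\left[
\begin{array}
[c]{c}
n\\
n
\end{array}
\right]  _{q}=1$ and $B_{n-k,q}=B_{0,q}$, gives $B_{0,q}A_{n,q}(x)=x^{n}-\sum_{k=0}^{n-1}\left[
\begin{array}
[c]{c}
n\\
k
\end{array}
\right]  _{q}B_{n-k,q}A_{k,q}(x)$, and dividing by $B_{0,q}\neq0$ yields (\ref{22}).

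Staying strictly within the determinantal framework, one need not even cite Theorem \ref{TH3}: the determinant in (\ref{21}) is, up to one transposition and $n$ successive adjacent row interchanges, exactly the Cramer numerator for the $(n+1)$-st unknown of the lower-triangular system (\ref{20}), the common denominator being $(B_{0,q})^{n+1}$ — precisely the manipulation carried out at the end of the proof of Theorem \ref{TH2}, now read backwards. Hence the polynomials $A_{0,q}(x),\dots,A_{n,q}(x)$ produced by (\ref{21}) solve system (\ref{20}), and solving its last equation, $B_{n,q}A_{0,q}(x)+\left[
\begin{array}
[c]{c}
n\\
1
\end{array}
\right]  _{q}B_{n-1,q}A_{1,q}(x)+\dots+B_{0,q}A_{n,q}(x)=x^{n}$, for $A_{n,q}(x)$ gives (\ref{22}) directly.

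Finally, a fully self-contained derivation expands the determinant in (\ref{21}) along its last column: the cofactor of the entry $x^{n}$ in position $(1,n+1)$ is, up to sign, the determinant of a lower-triangular $n\times n$ matrix whose diagonal entries are all $B_{0,q}$, hence contributes the summand $x^{n}/B_{0,q}$, while the cofactor of the entry $\left[
\begin{array}
[c]{c}
n\\
k
\end{array}
\right]  _{q}B_{n-k,q}$ (for $k=0,\dots,n-1$) reproduces $-\tfrac{1}{B_{0,q}}\left[
\begin{array}
[c]{c}
n\\
k
\end{array}
\right]  _{q}B_{n-k,q}A_{k,q}(x)$ after the same row- and column-rescalings employed in the proof of Theorem \ref{TH1}. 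The only mildly technical point in any of these approaches is the bookkeeping of the $q$-binomial coefficients under those rescalings, which is handled exactly as in the proof of Theorem \ref{TH1} and its corollaries; there is no genuine obstacle here, since the statement is a direct consequence of results already in hand.
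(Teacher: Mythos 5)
Your proposal is correct, but it reaches (\ref{22}) by a different route than the paper. The paper's proof works entirely inside the determinant of Definition \ref{Def1}: it performs a Laplace expansion along the \emph{last row} (which has only two nonzero entries, $B_{0,q}$ and $\left[\begin{smallmatrix}n\\n-1\end{smallmatrix}\right]_{q}B_{1,q}$), peels off the single term $-\frac{1}{B_{0,q}}\left[\begin{smallmatrix}n\\n-1\end{smallmatrix}\right]_{q}B_{1,q}A_{n-1,q}(x)$, and then iterates the same expansion on the surviving minor, telescoping down to the $2\times2$ case; the sum in (\ref{22}) accumulates one term per iteration. Your first two routes instead short-circuit this by observing that (\ref{22}) is just the last equation of the triangular system (\ref{20}) (equivalently, relation (\ref{19})) solved for $A_{n,q}(x)$ after isolating the $k=n$ term; this is perfectly sound given that (\ref{19}) was established in the proof of Theorem \ref{TH2} and the characterizations are equivalent by Theorem \ref{TH3}, and it is considerably shorter --- the only caveat is that it derives (\ref{22}) from the generating-function side rather than from the determinant itself, which slightly undercuts the paper's stated aim of reproving such identities "in the light of the new definition" (and note the paper deduces (\ref{23}), i.e.\ (\ref{19}), as a \emph{corollary} of (\ref{22})). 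Your third route, a single Laplace expansion along the last \emph{column}, is the closest in spirit to the paper and is arguably cleaner than the paper's iteration: deleting row $j+2$ and the last column produces a block upper-triangular minor whose top-left $(j+1)\times(j+1)$ block is exactly the matrix defining $A_{j,q}(x)$ and whose bottom-right block is triangular with diagonal $B_{0,q}$, so each cofactor evaluates to $(-1)^{j}(B_{0,q})^{n}A_{j,q}(x)$ with no rescaling of rows or columns actually required (your appeal to the rescalings of Theorem \ref{TH1} is over-cautious but harmless). Any of the three routes constitutes a complete proof.
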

\begin{proof}
Start from expanding the determinant in the Definition \ref{Def1} along with the $n+1^{th}$ row%
\begin{align*}
A_{n,q}(x)=\frac{(-1)^{n}}{(B_{0,q})^{n+1}}\left[
\begin{array}
[c]{c}%
n\\
n-1
\end{array}
\right]  _{q}B_{1,q}\times &&
\end{align*}
\begin{flalign*}
\left\vert
\begin{array}
[c]{cccccc}%
1 & x & x^{2} & ... & ... & x^{n-1}\\
B_{0,q} & B_{1,q} & B_{2,q} & ... & ... & B_{n-1,q}\\
0 & B_{0,q} & \left[
\begin{array}
[c]{c}%
2\\
1
\end{array}
\right]  _{q}B_{1,q} & ... & ... & \left[
\begin{array}
[c]{c}%
n-1\\
1
\end{array}
\right]  _{q}B_{n-2,q}\\
0 & 0 & B_{0,q} & ... & ... & \left[
\begin{array}
[c]{c}%
n-1\\
2
\end{array}
\right]  _{q}B_{n-3,q}\\
\vdots &  &  & \ddots &  & \vdots\\
0 & 0 & \ldots & \ldots & B_{0,q} & \left[
\begin{array}
[c]{c}%
n-1\\
n-2
\end{array}
\right]  _{q}B_{1,q}%
\end{array}
\right\vert
\end{flalign*}
\begin{flalign*}
+\frac{(-1)^{n+1}}{(B_{0,q})^{n+1}}B_{0,q}\times &&
\end{flalign*}
\begin{flalign*}
\left\vert
\begin{array}
[c]{ccccccc}%
1 & x & x^{2} & ... & ... & x^{n-2} & x^{n}\\
B_{0,q} & B_{1,q} & B_{2,q} & ... & ... & B_{n-2,q} & B_{n,q}\\
0 & B_{0,q} & \left[
\begin{array}
[c]{c}%
2\\
1
\end{array}
\right]  _{q}B_{1,q} & ... & ... & \left[
\begin{array}
[c]{c}%
n-2\\
1
\end{array}
\right]  _{q}B_{n-3,q} & \left[
\begin{array}
[c]{c}%
n\\
1
\end{array}
\right]  _{q}B_{n-1,q}\\
0 & 0 & B_{0,q} & ... & ... & \left[
\begin{array}
[c]{c}%
n-2\\
2
\end{array}
\right]  _{q}B_{n-4,q} & \left[
\begin{array}
[c]{c}%
n\\
2
\end{array}
\right]  _{q}B_{n-2,q}\\
\vdots &  &  &  & \ddots & \vdots & \vdots\\
0 & \ldots & \ldots &  &  & B_{0,q} & \left[
\begin{array}
[c]{c}%
n-1\\
n-2
\end{array}
\right]  _{q}B_{2,q}%
\end{array}
\right\vert
\end{flalign*}
\begin{flalign*}
 =\frac{-1}{B_{0,q}}\left[
\begin{array}
[c]{c}%
n\\
n-1
\end{array}
\right]  _{q}B_{1,q}A_{n-1,q}(x)+\frac{(-1)^{n+1}}{(B_{0,q})^{n}}%
\times &&
\end{flalign*}
\begin{flalign*}
\left\vert
\begin{array}
[c]{ccccccc}%
1 & x & x^{2} & ... & ... & x^{n-2} & x^{n}\\
B_{0,q} & B_{1,q} & B_{2,q} & ... & ... & B_{n-2,q} & B_{n,q}\\
0 & B_{0,q} & \left[
\begin{array}
[c]{c}%
2\\
1
\end{array}
\right]  _{q}B_{1,q} & ... & ... & \left[
\begin{array}
[c]{c}%
n-2\\
1
\end{array}
\right]  _{q}B_{n-3,q} & \left[
\begin{array}
[c]{c}%
n\\
1
\end{array}
\right]  _{q}B_{n-1,q}\\
0 & 0 & B_{0,q} & ... & ... & \left[
\begin{array}
[c]{c}%
n-2\\
2
\end{array}
\right]  _{q}B_{n-4,q} & \left[
\begin{array}
[c]{c}%
n\\
2
\end{array}
\right]  _{q}B_{n-2,q}\\
\vdots &  &  &  & \ddots & \vdots & \vdots\\
0 & \ldots & \ldots &  &  & B_{0,q} & \left[
\begin{array}
[c]{c}%
n-1\\
n-2
\end{array}
\right]  _{q}B_{2,q}%
\end{array}
\right\vert
\end{flalign*}

Now repeat the same method for the last determinant%

\begin{flalign*}
&  =\frac{-1}{B_{0,q}}\left[
\begin{array}
[c]{c}%
n\\
n-1
\end{array}
\right]  _{q}B_{1,q}A_{n-1,q}(x)+\frac{(-1)^{n+1}}{(B_{0,q})^{n}}\left[
\begin{array}
[c]{c}%
n-1\\
n-2
\end{array}
\right]  _{q}B_{2,q}\times &&
\end{flalign*}
\begin{flalign*}
\left\vert
\begin{array}
[c]{cccccc}%
1 & x & x^{2} & ... & x^{n-3} & x^{n-2}\\
B_{0,q} & B_{1,q} & B_{2,q} & ... & B_{n-3,q} & B_{n-2,q}\\
0 & B_{0,q} & \left[
\begin{array}
[c]{c}%
2\\
1
\end{array}
\right]  _{q}B_{1,q} & ... & \left[
\begin{array}
[c]{c}%
n-3\\
1
\end{array}
\right]  _{q}B_{n-4,q} & \left[
\begin{array}
[c]{c}%
n-2\\
1
\end{array}
\right]  _{q}B_{n-3,q}\\
0 & 0 & B_{0,q} & ... & \left[
\begin{array}
[c]{c}%
n-3\\
2
\end{array}
\right]  _{q}B_{n-5,q} & \left[
\begin{array}
[c]{c}%
n-2\\
2
\end{array}
\right]  _{q}B_{n-4,q}\\
\vdots &  &  &  & \vdots & \vdots\\
0 & \ldots & \ldots &  & B_{0,q} & \left[
\begin{array}
[c]{c}%
n-2\\
n-3
\end{array}
\right]  _{q}B_{1,q}%
\end{array}
\right\vert
\end{flalign*}
\begin{flalign*}
+\frac{(-1)^{n+2}}{(B_{0,q})^{n}}B_{0,q}\times &&
\end{flalign*}
\begin{flalign*}
=\frac{-1}{B_{0,q}}\left[
\begin{array}
[c]{c}%
n\\
n-1
\end{array}
\right]  _{q}B_{1,q}A_{n-1,q}(x)+\frac{(-1)^{n-1}}{(B_{0,q})^{n}}\left(
\left[
\begin{array}
[c]{c}%
n-1\\
n-2
\end{array}
\right]  _{q}B_{2,q}\frac{(B_{0,q})^{n-1}}{(-1)^{n-2}}A_{n-2,q}(x)\right)
\end{flalign*}
\begin{flalign*}
+\frac{(-1)^{n-2}}{(B_{0,q})^{n-1}}\times \\ &&
\end{flalign*}
\begin{flalign*}
\left\vert
\begin{array}
[c]{ccccccc}%
1 & x & x^{2} & ... & ... & x^{n-3} & x^{n}\\
B_{0,q} & B_{1,q} & B_{2,q} & ... & ... & B_{n-3,q} & B_{n,q}\\
0 & B_{0,q} & \left[
\begin{array}
[c]{c}%
2\\
1
\end{array}
\right]  _{q}B_{1,q} & ... & ... & \left[
\begin{array}
[c]{c}%
n-3\\
1
\end{array}
\right]  _{q}B_{n-4,q} & \left[
\begin{array}
[c]{c}%
n\\
1
\end{array}
\right]  _{q}B_{n-1,q}\\
0 & 0 & B_{0,q} & ... & ... & \left[
\begin{array}
[c]{c}%
n-3\\
2
\end{array}
\right]  _{q}B_{n-5,q} & \left[
\begin{array}
[c]{c}%
n\\
2
\end{array}
\right]  _{q}B_{n-2,q}\\
\vdots &  &  &  & \ddots & \vdots & \vdots\\
0 & \ldots & \ldots &  & 0 & B_{0,q} & \left[
\begin{array}
[c]{c}%
n-1\\
n-2
\end{array}
\right]  _{q}B_{2,q}%
\end{array}
\right\vert
\end{flalign*}
\begin{flalign*}
=\frac{-1}{B_{0,q}}\left[
\begin{array}
[c]{c}%
n\\
n-1
\end{array}
\right]  _{q}B_{1,q}A_{n-1,q}(x)-\frac{1}{B_{0,q}}\left[
\begin{array}
[c]{c}%
n-1\\
n-2
\end{array}
\right]  _{q}B_{2,q}A_{n-2,q}(x)+\frac{(-1)^{n-2}}{(B_{0,q})^{n-1}}\times &&
\end{flalign*}
\begin{flalign*}
\left\vert
\begin{array}
[c]{ccccccc}%
1 & x & x^{2} & ... & ... & x^{n-3} & x^{n}\\
B_{0,q} & B_{1,q} & B_{2,q} & ... & ... & B_{n-3,q} & B_{n,q}\\
0 & B_{0,q} & \left[
\begin{array}
[c]{c}%
2\\
1
\end{array}
\right]  _{q}B_{1,q} & ... & ... & \left[
\begin{array}
[c]{c}%
n-3\\
1
\end{array}
\right]  _{q}B_{n-4,q} & \left[
\begin{array}
[c]{c}%
n\\
1
\end{array}
\right]  _{q}B_{n-1,q}\\
0 & 0 & B_{0,q} & ... & ... & \left[
\begin{array}
[c]{c}%
n-3\\
2
\end{array}
\right]  _{q}B_{n-5,q} & \left[
\begin{array}
[c]{c}%
n\\
2
\end{array}
\right]  _{q}B_{n-2,q}\\
\vdots &  &  &  & \ddots & \vdots & \vdots\\
0 & \ldots & \ldots &  & 0 & B_{0,q} & \left[
\begin{array}
[c]{c}%
n-1\\
n-2
\end{array}
\right]  _{q}B_{2,q}%
\end{array}
\right\vert
\end{flalign*}

Continue a similar method to arrive at%

\begin{equation*}
=\frac{-1}{B_{0,q}}\left[
\begin{array}
[c]{c}%
n\\
n-1
\end{array}
\right]  _{q}B_{1,q}A_{n-1,q}(x)-\frac{1}{B_{0,q}}\left[
\begin{array}
[c]{c}%
n-1\\
n-2
\end{array}
\right]  _{q}B_{2,q}A_{n-2,q}(x)\\
\end{equation*}
\begin{equation*}
-\ldots-\frac{1}{(B_{0,q})^{2}}\left\vert
\begin{array}
[c]{cc}%
1 & x^{n}\\
B_{0,q} & B_{n,q}%
\end{array}
\right\vert \\
\end{equation*}
\begin{equation*}
=\frac{-1}{B_{0,q}}\left[
\begin{array}
[c]{c}%
n\\
n-1
\end{array}
\right]  _{q}B_{1,q}A_{n-1,q}(x)-\frac{1}{B_{0,q}}\left[
\begin{array}
[c]{c}%
n-1\\
n-2
\end{array}
\right]  _{q}B_{2,q}A_{n-2,q}(x)\\
\end{equation*}
\begin{equation*}
-\ldots-\frac{1}{(B_{0,q})^{2}}\left(
B_{n,q}-B_{0,q}x^{n}\right)  \\
\end{equation*}
\begin{equation*}
=\frac{-1}{B_{0,q}}\left[
\begin{array}
[c]{c}%
n\\
n-1
\end{array}
\right]  _{q}B_{1,q}A_{n-1,q}(x)-\frac{1}{B_{0,q}}\left[
\begin{array}
[c]{c}%
n-1\\
n-2
\end{array}
\right]  _{q}B_{2,q}A_{n-2,q}(x)-\ldots\\
\end{equation*}
\begin{equation*}
-\frac{1}{B_{0,q}}B_{n,q}%
A_{0,q}(x)+\frac{1}{B_{0,q}}x^{n}\\
\end{equation*}
\begin{equation*}
=\frac{1}{B_{0,q}}(x^{n}-\sum_{k=0}^{n-1}\left[
\begin{array}
[c]{c}%
n\\
k
\end{array}
\right]  _{q}B_{n-k,q}A_{k,q}(x)).
\end{equation*}

\end{proof}
\begin{corollary}
Powers of $x$ can be expressed based on $q$-Appell polynomials as
\end{corollary}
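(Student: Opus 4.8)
The plan is to imitate, with the roles of $x^{n}$ and $A_{n,q}(x)$ interchanged, the argument used in the proof of Theorem \ref{TH2}. By the Corollary following Theorem \ref{TH1} (relation \eqref{11}), together with $A_{n,q}:=A_{n,q}(0)$ and $A_{0,q}\neq 0$, the identities
\[
A_{n,q}(x)=\sum_{j=0}^{n}\left[\begin{array}{c}n\\ j\end{array}\right]_{q}A_{n-j,q}\,x^{j},\qquad n=0,1,2,\dots
\]
form, when written out successively for $n=0,1,2,\dots$, an infinite linear system in the unknowns $1,x,x^{2},\dots$ whose coefficient matrix is lower triangular with every diagonal entry equal to $A_{0,q}$ (note that the first equation reads $A_{0,q}(x)=A_{0,q}=1/B_{0,q}$, which is consistent with Definition \ref{Def1}). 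Since $A_{0,q}\neq 0$, Cramer's rule applied to the first $n+1$ equations expresses $x^{n}$ as a quotient of two determinants: the denominator is the determinant of the triangular coefficient matrix, namely $(A_{0,q})^{n+1}$, and the numerator is obtained from that matrix by replacing its last column (the one carrying the coefficients of $x^{n}$) with the column of right-hand sides $\bigl(A_{0,q}(x),A_{1,q}(x),\dots,A_{n,q}(x)\bigr)^{T}$.

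Next I would bring this numerator into the claimed shape exactly as at the end of the proof of Theorem \ref{TH2}: take the transpose (which does not change the value), and then move the first row downward past the remaining $n$ rows by $n$ successive adjacent row interchanges. The $n$ interchanges contribute the factor $(-1)^{n}$, and after relabelling one reads off precisely the asserted determinant, in which the first row is $A_{0,q}(x),A_{1,q}(x),\dots,A_{n,q}(x)$, the second row is $A_{0,q},A_{1,q},\dots,A_{n,q}$, and the lower rows carry the $q$-binomial-weighted entries $\left[\begin{array}{c}i\\ i-1\end{array}\right]_{q}A_{\bullet,q}$ with zeros below the subdiagonal.

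The only step requiring care is the bookkeeping of the sign $(-1)^{n}$ and of the positions of the $q$-binomial coefficients after transposition and the cyclic row shift; but this is verbatim the manipulation already carried out in the proof of Theorem \ref{TH2}, so no genuinely new obstacle appears. As an immediate by-product (by expanding the resulting determinant along its first row, or simply by comparing the coefficients of $t^{n}/[n]_{q}!$ already obtained in \eqref{19}), one also recovers the non-determinantal expression $x^{n}=\sum_{k=0}^{n}\left[\begin{array}{c}n\\ k\end{array}\right]_{q}B_{n-k,q}A_{k,q}(x)$.
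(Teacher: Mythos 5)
Your proposal is correct in substance, but it takes a genuinely different and much longer road than the paper. The paper's proof of \eqref{23} is a one-line rearrangement of relation \eqref{22} from Theorem \ref{TH4}: moving the sum across the equality in $A_{n,q}(x)=\frac{1}{B_{0,q}}\bigl(x^{n}-\sum_{k=0}^{n-1}\left[\begin{array}{c}n\\ k\end{array}\right]_{q}B_{n-k,q}A_{k,q}(x)\bigr)$ and absorbing the term $B_{0,q}A_{n,q}(x)$ as the $k=n$ summand gives \eqref{23} at once. Equivalently, \eqref{23} is literally relation \eqref{19}, already established inside the proof of Theorem \ref{TH2} by comparing coefficients of $t^{n}/[n]_{q}!$; your closing remark notices this, and that observation by itself is a complete and airtight proof. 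Your main construction --- writing \eqref{11} as a lower-triangular system in the unknowns $1,x,x^{2},\dots$ with diagonal $A_{0,q}$ and applying Cramer's rule --- is a legitimate ``dual'' of Theorem \ref{TH2} and actually yields something stronger, namely a determinantal representation of $x^{n}$ built from the numbers $A_{j,q}$. What it buys is an extra formula the paper does not state; what it costs is that extracting the claimed identity from that determinant still requires identifying the first-row cofactors with $\left[\begin{array}{c}n\\ k\end{array}\right]_{q}B_{n-k,q}$, i.e.\ recognizing that the $\alpha_{j}$ of Theorem \ref{TH1} computed with $\beta_{j}=A_{j,q}$ coincide with the $B_{j,q}$ via the recursion \eqref{17} --- a step you only gesture at. So the determinantal detour is not wrong, but it is neither needed nor fully self-contained as written; the direct appeal to \eqref{19} or \eqref{22} is the intended argument.
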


\begin{equation}
x^{n}=\sum_{k=0}^{n}\left[
\begin{array}
[c]{c}%
n\\
k
\end{array}
\right]  _{q}B_{n-k,q}A_{k,q}(x),\ \ n=1,2,3,.... \label{23}%
\end{equation}

\begin{proof}
The proof is the direct result of relation(\ref{22}) in Theorem \ref{TH4}.
\end{proof}

\begin{notation}
Suppose $P_{n}(x)$ and $Q_{n}(x)$ are two polynomials of degree $n$. Let
$P_{n}(x)$ be defined as in relation(\ref{4}). Then for n=1,2,3,..., we have
\begin{flalign*}
(PQ)(x):=\frac{(-1)^{n}}{(\beta_{0})^{n+1}}\times\\ &&
\end{flalign*}
\begin{equation}
\left\vert
\begin{array}
[c]{ccccccc}%
Q_{0}(x) & Q_{1}(x) & Q_{2}(x) & ... & ... & Q_{n-1}(x) & Q_{n}(x)\\
\beta_{0} & \beta_{1} & \beta_{2} & ... & ... & \beta_{n-1} & \beta_{n}\\
0 & \beta_{0} & \left[
\begin{array}
[c]{c}%
2\\
1
\end{array}
\right]  _{q}\beta_{1} & ... & ... & \left[
\begin{array}
[c]{c}%
n-1\\
1
\end{array}
\right]  _{q}\beta_{n-2} & \left[
\begin{array}
[c]{c}%
n\\
1
\end{array}
\right]  _{q}\beta_{n-1}\\
0 & 0 & \beta_{0} & ... & ... & \left[
\begin{array}
[c]{c}%
n-1\\
2
\end{array}
\right]  _{q}\beta_{n-3} & \left[
\begin{array}
[c]{c}%
n\\
2
\end{array}
\right]  _{q}\beta_{n-2}\\
\vdots &  &  & \ddots &  & \vdots & \vdots\\
\vdots &  &  &  & \ddots & \vdots & \vdots\\
0 & ... & ... & ... & 0 & \beta_{0} & \left[
\begin{array}
[c]{c}%
n\\
n-1
\end{array}
\right]  _{q}\beta_{1}%
\end{array}
\right\vert \text{ \ \ },\text{ } \label{24}%
\end{equation}
\end{notation}

\begin{theorem}
Suppose that $\{A_{n,q}(x)\}_{n=0}^{\infty}$ and $\{\tilde{A}_{n,q}%
(x)\}_{n=0}^{\infty}$ are two families of $q$-Appell polynomials. Then
\end{theorem}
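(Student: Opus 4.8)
The goal is to show that the polynomial set $(A\tilde{A})(x)$ built from (\ref{24}), with $\beta_i=B_{i,q}$ the coefficients attached to $1/A_q(t)$ and $Q_j(x)=\tilde{A}_{j,q}(x)$, is again a $q$-Appell polynomial set, and to write it down explicitly. First I would expand the determinant in (\ref{24}) along its first row, exactly as in the proof of Theorem \ref{TH1}. The point is that the cofactor of the entry $Q_j(x)$ involves only rows $2,\dots,n+1$, whose entries are precisely the $\beta_i=B_{i,q}$ appearing in Definition \ref{Def1}; and none of the column rescalings by $1/[j]_q$ or the row rescalings by $[i-1]_q$ used in the proof of Theorem \ref{TH1} ever touch the first row. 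Hence that argument transfers verbatim and the coefficient of $Q_j(x)$ comes out as $\left[\begin{smallmatrix}n\\j\end{smallmatrix}\right]_q\alpha_{n-j}$, where, by Theorems \ref{TH1} and \ref{TH2}, the choice $\beta_i=B_{i,q}$ forces $\alpha_{n-j}=A_{n-j,q}(0)=A_{n-j,q}$. This gives
\[
(A\tilde{A})(x)=\sum_{j=0}^{n}\left[\begin{smallmatrix}n\\j\end{smallmatrix}\right]_q A_{n-j,q}\,\tilde{A}_{j,q}(x),\qquad n=0,1,2,\dots .
\]

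Next I would pass to generating functions. By (\ref{2}) applied to the family $\{\tilde{A}_{n,q}(x)\}$ one has $\sum_{n\ge0}\tilde{A}_{n,q}(x)\,t^n/[n]_q!=\tilde{A}_q(t)e_q(tx)$, while $\sum_{n\ge0}A_{n,q}\,t^n/[n]_q!=A_q(t)$ by (\ref{3}). Forming the $q$-Cauchy product of these two series --- the same tool used in the proof of Theorem \ref{TH2} --- and collecting the coefficient of $t^n/[n]_q!$ recovers precisely the sum displayed above, so that
\[
\sum_{n=0}^{\infty}(A\tilde{A})(x)\,\frac{t^n}{[n]_q!}=A_q(t)\,\tilde{A}_q(t)\,e_q(tx).
\]
Since $A_q(t)\tilde{A}_q(t)$ is analytic and nonzero at $t=0$, Theorem \ref{TH3} then identifies $(A\tilde{A})(x)$ as a bona fide $q$-Appell sequence with generating function $A_q(t)\tilde{A}_q(t)$; in particular it satisfies the defining relation (\ref{1}). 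Finally, commutativity and associativity of ordinary multiplication of the analytic functions $A_q(t),\tilde{A}_q(t)$ make the operation $(A,\tilde{A})\mapsto(A\tilde{A})$ commutative and associative; equivalently, re-indexing $j\mapsto n-j$ in the closed form yields the symmetric identity $\sum_{j}\left[\begin{smallmatrix}n\\j\end{smallmatrix}\right]_q A_{n-j,q}\tilde{A}_{j,q}(x)=\sum_{j}\left[\begin{smallmatrix}n\\j\end{smallmatrix}\right]_q\tilde{A}_{n-j,q}A_{j,q}(x)$, showing the definition (\ref{24}) is insensitive to which sequence supplies the $\beta$'s.

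The only genuinely technical step is the first one: checking that replacing the top row $1,x,\dots,x^n$ of (\ref{4}) by $Q_0(x),\dots,Q_n(x)$ does not disturb the cofactor manipulations of Theorem \ref{TH1}. I expect this to be the main --- indeed the only --- obstacle, but it is routine: every operation in that proof acts on columns and on rows $2,\dots,n+1$ and never exploits the monomial form of the first row, so the same sequence of rescalings produces the stated $q$-binomial coefficients. Once (\ref{24}) has been rewritten in summation form, the rest is immediate from Theorems \ref{TH1}, \ref{TH2} and \ref{TH3}.
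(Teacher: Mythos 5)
Your argument for part (b) is correct, but it takes a genuinely different route from the paper's. The paper stays entirely inside the determinantal framework: it applies the $q$-derivative formula of Lemma \ref{Lemma1} directly to the determinant in (\ref{24}) (only the first row depends on $x$), uses $D_{q,x}\tilde{A}_{j,q}(x)=[j]_q\tilde{A}_{j-1,q}(x)$ to rewrite that row as $0,\tilde{A}_{0,q}(x),[2]_q\tilde{A}_{1,q}(x),\dots,[n]_q\tilde{A}_{n-1,q}(x)$, expands along the first column, and rescales rows and columns exactly as in Theorem \ref{TH0} to land on $[n]_q(A\tilde{A})_{n-1,q}(x)$. You instead first extract the closed form $(A\tilde{A})_{n,q}(x)=\sum_{j}\left[\begin{smallmatrix}n\\j\end{smallmatrix}\right]_{q}A_{n-j,q}\tilde{A}_{j,q}(x)$ by row expansion (your observation that the cofactor manipulations of Theorem \ref{TH1} never involve the first row is the right justification), then pass to generating functions and invoke Theorem \ref{TH3}. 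Your route costs the extra step through Theorem \ref{TH3} but buys more: the explicit generating function $A_q(t)\tilde{A}_q(t)$ of the product family, the umbral convolution formula, and commutativity of the operation --- none of which the paper's proof exhibits. Two small caveats. First, the theorem also has a part (a) (linear combinations $\alpha A_{n,q}+\beta\tilde{A}_{n,q}$ form a $q$-Appell family), which you do not address; it is immediate from linearity of $D_q$ or of the determinant in its first row, but it should be stated. Second, your parenthetical claim that commutativity follows by "re-indexing $j\mapsto n-j$" is not quite right: that re-indexing gives $\sum_j\left[\begin{smallmatrix}n\\j\end{smallmatrix}\right]_{q}A_{j,q}\tilde{A}_{n-j,q}(x)$, which still draws the numbers from the $A$-family and the polynomials from the $\tilde{A}$-family; the genuine symmetry $\sum_j\left[\begin{smallmatrix}n\\j\end{smallmatrix}\right]_{q}A_{n-j,q}\tilde{A}_{j,q}(x)=\sum_j\left[\begin{smallmatrix}n\\j\end{smallmatrix}\right]_{q}\tilde{A}_{n-j,q}A_{j,q}(x)$ is best seen from $A_q(t)\tilde{A}_q(t)e_q(tx)=\tilde{A}_q(t)A_q(t)e_q(tx)$, which you already have.
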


\begin{enumerate}
\item[a)] For every $\alpha$ and $\beta\in\mathbb{R}$, $\{\alpha
A_{n,q}(x)+\beta\tilde{A}_{n,q}(x)\}_{n=0}^{\infty}$ is also a family of
$q$-Appell polynomials.

\item[b)] $\{(A\tilde{A})_{n,q}(x)\}_{n=0}^{\infty}$ is also a family of
$q$-Appell polynomials.
\end{enumerate}

\begin{proof}
a) The proof is the direct consequence of linear properties of determinant.

b) According to the determinantal definition of $q$-Appell polynomials given
in Theorem \ref{TH2} relation(\ref{13}) and also notation(\ref{24}), we have%

\begin{flalign*}
(A\tilde{A})_{n,q}(x)  &  =A_{n,q}(\tilde{A}_{n,q}(x))=\\
\frac{(-1)^{n}}{(B_{0,q})^{n+1}}\times &&
\end{flalign*}
\begin{align*}
\left\vert
\begin{array}
[c]{ccccccc}%
\tilde{A}_{0,q}(x) & \tilde{A}_{1,q}(x) & \tilde{A}_{2,q}(x) & ... & ... &
\tilde{A}_{n-1,q}(x) & \tilde{A}_{n,q}(x)\\
B_{0,q} & B_{1,q} & B_{2,q} & ... & ... & B_{n-1,q} & B_{n,q}\\
0 & B_{0,q} & \left[
\begin{array}
[c]{c}%
2\\
1
\end{array}
\right]  _{q}B_{1,q} & ... & ... & \left[
\begin{array}
[c]{c}%
n-1\\
1
\end{array}
\right]  _{q}B_{n-2,q} & \left[
\begin{array}
[c]{c}%
n\\
1
\end{array}
\right]  _{q}B_{n-1,q}\\
0 & 0 & B_{0,q} & ... & ... & \left[
\begin{array}
[c]{c}%
n-1\\
2
\end{array}
\right]  _{q}B_{n-3,q} & \left[
\begin{array}
[c]{c}%
n\\
2
\end{array}
\right]  _{q}B_{n-2,q}\\
\vdots &  &  & \ddots &  & \vdots & \vdots\\
\vdots &  &  &  & \ddots & \vdots & \vdots\\
0 & ... & ... & ... & 0 & B_{0,q} & \left[
\begin{array}
[c]{c}%
n\\
n-1
\end{array}
\right]  _{q}B_{1,q}%
\end{array}
\right\vert .
\end{align*}

Using formula(\ref{5}) given in Lemma \ref{Lemma1} we have%

\begin{flalign*}
D_{q}((A\tilde{A})_{n,q}(x))\\
=\frac{(-1)^{n}}{(B_{0,q})^{n+1}}\times \\ &&
\end{flalign*}
\begin{align*}
\left\vert
\begin{array}
[c]{ccccccc}%
D_{q}(\tilde{A}_{0,q}(x)) & D_{q}(\tilde{A}_{1,q}(x)) & D_{q}(\tilde{A}%
_{2,q}(x)) & ... & ... & D_{q}(\tilde{A}_{n-1,q}(x)) & D_{q}(\tilde{A}%
_{n,q}(x))\\
B_{0,q} & B_{1,q} & B_{2,q} & ... & ... & B_{n-1,q} & B_{n,q}\\
0 & B_{0,q} & \left[
\begin{array}
[c]{c}%
2\\
1
\end{array}
\right]  _{q}B_{1,q} & ... & ... & \left[
\begin{array}
[c]{c}%
n-1\\
1
\end{array}
\right]  _{q}B_{n-2,q} & \left[
\begin{array}
[c]{c}%
n\\
1
\end{array}
\right]  _{q}B_{n-1,q}\\
0 & 0 & B_{0,q} & ... & ... & \left[
\begin{array}
[c]{c}%
n-1\\
2
\end{array}
\right]  _{q}B_{n-3,q} & \left[
\begin{array}
[c]{c}%
n\\
2
\end{array}
\right]  _{q}B_{n-2,q}\\
\vdots &  &  & \ddots &  & \vdots & \vdots\\
\vdots &  &  &  & \ddots & \vdots & \vdots\\
0 & ... & ... & ... & 0 & B_{0,q} & \left[
\begin{array}
[c]{c}%
n\\
n-1
\end{array}
\right]  _{q}B_{1,q}%
\end{array}
\right\vert
\end{align*}

Since $\{\tilde{A}_{n,q}(x)\}_{n=0}^{\infty}$ is a family of $q$-Appell
polynomials, according to relation(\ref{1}) we have%

\[
D_{q,x}(\tilde{A}_{n,q}(x))=[n]_{q}\tilde{A}_{n-1,q}(x),\ \ n=0,1,2,....
\]

Therefore we can continue as%

\begin{flalign*}
D_{q}((A\tilde{A})_{n,q}(x))= \frac{(-1)^{n}}{(B_{0,q})^{n+1}}\times \\ &&
\end{flalign*}
\begin{align*}
\left\vert
\begin{array}
[c]{ccccccc}%
0 & \tilde{A}_{0,q}(x) & [2]_{q}\tilde{A}_{1,q}(x) & ... & ... &
[n-1]_{q}\tilde{A}_{n-2,q}(x) & [n]_{q}\tilde{A}_{n-1,q}(x)\\
B_{0,q} & B_{1,q} & B_{2,q} & ... & ... & B_{n-1,q} & B_{n,q}\\
0 & B_{0,q} & \left[
\begin{array}
[c]{c}%
2\\
1
\end{array}
\right]  _{q}B_{1,q} & ... & ... & \left[
\begin{array}
[c]{c}%
n-1\\
1
\end{array}
\right]  _{q}B_{n-2,q} & \left[
\begin{array}
[c]{c}%
n\\
1
\end{array}
\right]  _{q}B_{n-1,q}\\
0 & 0 & B_{0,q} & ... & ... & \left[
\begin{array}
[c]{c}%
n-1\\
2
\end{array}
\right]  _{q}B_{n-3,q} & \left[
\begin{array}
[c]{c}%
n\\
2
\end{array}
\right]  _{q}B_{n-2,q}\\
\vdots &  &  & \ddots &  & \vdots & \vdots\\
\vdots &  &  &  & \ddots & \vdots & \vdots\\
0 & ... & ... & ... & 0 & B_{0,q} & \left[
\begin{array}
[c]{c}%
n\\
n-1
\end{array}
\right]  _{q}B_{1,q}%
\end{array}
\right\vert .
\end{align*}

Now, expand the last determinant along with the first column as follows%

\begin{flalign*}
&  =\frac{(-1)^{n}}{(B_{0,q})^{n+1}}\times-B_{0,q}\times  &&
\end{flalign*}
\begin{flalign*}
\left\vert
\begin{array}
[c]{cccccc}%
\tilde{A}_{0,q}(x) & [2]_{q}\tilde{A}_{1,q}(x) & ... & ... & [n-1]_{q}%
\tilde{A}_{n-2,q}(x) & [n]_{q}\tilde{A}_{n-1,q}(x)\\
B_{0,q} & \left[
\begin{array}
[c]{c}%
2\\
1
\end{array}
\right]  _{q}B_{1,q} & ... & ... & \left[
\begin{array}
[c]{c}%
n-1\\
1
\end{array}
\right]  _{q}B_{n-2,q} & \left[
\begin{array}
[c]{c}%
n\\
1
\end{array}
\right]  _{q}B_{n-1,q}\\
0 & B_{0,q} & ... & ... & \left[
\begin{array}
[c]{c}%
n-1\\
2
\end{array}
\right]  _{q}B_{n-3,q} & \left[
\begin{array}
[c]{c}%
n\\
2
\end{array}
\right]  _{q}B_{n-2,q}\\
\vdots &  & \ddots &  & \vdots & \vdots\\
\vdots &  &  & \ddots & \vdots & \vdots\\
... & ... & ... & 0 & B_{0,q} & \left[
\begin{array}
[c]{c}%
n\\
n-1
\end{array}
\right]  _{q}B_{1,q}%
\end{array}
\right\vert
\end{flalign*}
\begin{flalign*}
=[n]_{q}(A\tilde{A})_{n-1,q}(x),&&
\end{flalign*}

which means that $\{(A\tilde{A})_{n,q}(x)\}_{n=0}^{\infty}$ belongs to the
family of $q$-Appell polynomials too.
\end{proof}

\begin{definition}
\label{2D}We define 2D $q$-Appell polynomials $\{A_{n,q}(x,y)\}_{n=0}^{\infty
}$ by means of the generating function below%
\begin{equation}
A_{q}(x,y,t):=A_{q}(t)e_{q}(tx)E_{q}(ty)=\sum\limits_{n=0}^{\infty}%
A_{n,q}(x,y)\frac{t^{n}}{\left[  n\right]  _{q}!}, \label{24'}%
\end{equation}
or equivalently%
\begin{equation}
\left\{
\begin{array}
[c]{l}%
A_{0,q}(x,y)=\frac{1}{B_{0,q}}\\
A_{n,q}(x,y)=\frac{(-1)^{n}}{(B_{0,q})^{n+1}}\times \\
\left\vert
\begin{array}
[c]{ccccccc}%
1 & x+y & (x+y)_{q}^{2} & ... & ... & (x+y)_{q}^{n-1} & (x+y)_{q}^{n}\\
B_{0,q} & B_{1,q} & B_{2,q} & ... & ... & B_{n-1,q} & B_{n,q}\\
0 & B_{0,q} & \left[
\begin{array}
[c]{c}%
2\\
1
\end{array}
\right]  _{q}B_{1,q} & ... & ... & \left[
\begin{array}
[c]{c}%
n-1\\
1
\end{array}
\right]  _{q}B_{n-2,q} & \left[
\begin{array}
[c]{c}%
n\\
1
\end{array}
\right]  _{q}B_{n-1,q}\\
0 & 0 & B_{0,q} & ... & ... & \left[
\begin{array}
[c]{c}%
n-1\\
2
\end{array}
\right]  _{q}B_{n-3,q} & \left[
\begin{array}
[c]{c}%
n\\
2
\end{array}
\right]  _{q}B_{n-2,q}\\
\vdots &  &  & \ddots &  & \vdots & \vdots\\
\vdots &  &  &  & \ddots & \vdots & \vdots\\
0 & ... & ... & ... & 0 & B_{0,q} & \left[
\begin{array}
[c]{c}%
n\\
n-1
\end{array}
\right]  _{q}B_{1,q}%
\end{array}
\right\vert
\end{array}
\text{.}\right.  \label{24''}%
\end{equation}

\end{definition}

\begin{remark}
From the Definition \ref{2D}, it is clear that%

\begin{equation}
A_{n,q}(x,0)=A_{n,q}(x). \label{24'''}%
\end{equation}

\end{remark}

\begin{theorem}
The following fact holds for 2D $q$-Appell polynomials $\{A_{n,q}%
(x,y)\}_{n=0}^{\infty}$%

\begin{equation}
A_{n,q}(x,y)=\sum\limits_{k=0}^{n}\left[
\begin{array}
[c]{c}%
n\\
k
\end{array}
\right]  _{q}q^{1/2(n-k)(n-k-1)}A_{k,q}(x)y^{n-k}. \label{25}%
\end{equation}

\end{theorem}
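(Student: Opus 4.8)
The plan is to argue directly from the generating function (\ref{24'}), factoring it as the product of the one–variable $q$-Appell generating function $A_q(x,t)=A_q(t)e_q(tx)$ with the second $q$-exponential $E_q(ty)$, and then to read off the coefficient of $t^{n}/[n]_q!$ by means of the $q$-Cauchy product already employed in the proof of Theorem \ref{TH2} (the passage from (\ref{15}) to (\ref{18})). Since no new analytic input is needed beyond convergence of these series for small $t$, the only work is combinatorial bookkeeping of the $q$-binomial weights.

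First I would record the two series to be multiplied. By (\ref{2}),
\[
A_q(x,t)=A_q(t)e_q(tx)=\sum_{k=0}^{\infty}A_{k,q}(x)\frac{t^{k}}{[k]_q!},
\]
while by the definition of $E_q$,
\[
E_q(ty)=\sum_{m=0}^{\infty}q^{\frac12 m(m-1)}y^{m}\,\frac{t^{m}}{[m]_q!}.
\]
Then, using $A_q(x,y,t)=A_q(x,t)\,E_q(ty)$ from (\ref{24'}) together with the identity $\dfrac{1}{[k]_q!\,[n-k]_q!}=\dfrac{1}{[n]_q!}\begin{bmatrix} n \\ k \end{bmatrix}_{q}$, the $q$-Cauchy product gives
\[
A_q(x,y,t)=\sum_{n=0}^{\infty}\left(\sum_{k=0}^{n}\begin{bmatrix} n \\ k \end{bmatrix}_{q}A_{k,q}(x)\,q^{\frac12(n-k)(n-k-1)}y^{n-k}\right)\frac{t^{n}}{[n]_q!}.
\]
Comparing this with the defining expansion $A_q(x,y,t)=\sum_{n\ge 0}A_{n,q}(x,y)\,t^{n}/[n]_q!$ in (\ref{24'}) and equating the coefficients of $t^{n}/[n]_q!$ yields (\ref{25}) for every $n\ge 0$.

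There is essentially no serious obstacle here; the one point requiring care is verifying that in the $q$-Cauchy product the factor $b_{m}=q^{\frac12 m(m-1)}y^{m}$ coming from $E_q(ty)$ is carried through with the index $n-k$, so that the summand picks up exactly $q^{\frac12(n-k)(n-k-1)}y^{n-k}$ alongside $\begin{bmatrix} n \\ k \end{bmatrix}_{q}$. As an alternative one could instead start from the determinantal form (\ref{24''}), expand along the first row, and rewrite each entry via (\ref{00}) as $(x+y)_q^{j}=\sum_{i=0}^{j}\begin{bmatrix} j \\ i \end{bmatrix}_{q}q^{\frac12 i(i-1)}x^{j-i}y^{i}$, regrouping by powers of $y$ and invoking Definition \ref{Def1}; this route also works but is more laborious, so I would present the generating-function argument.
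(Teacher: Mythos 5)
Your argument is correct, but it is not the route the paper takes: the paper's proof consists of the single remark ``Proof is simple and based on properties of determinant,'' i.e.\ it starts from the determinantal form (\ref{24''}), expands each first-row entry $(x+y)_q^{j}$ by (\ref{00}), and uses multilinearity of the determinant in the first row (plus manipulations of the kind carried out in Theorem \ref{TH1}) to regroup the pieces as $\left[\begin{array}{c} n\\ k\end{array}\right]_q q^{(n-k)(n-k-1)/2}A_{k,q}(x)\,y^{n-k}$. You instead work from the generating-function form (\ref{24'}), writing $A_q(x,y,t)=A_q(x,t)E_q(ty)$ and taking the $q$-Cauchy product of $\sum_k A_{k,q}(x)t^k/[k]_q!$ with $\sum_m q^{m(m-1)/2}y^m t^m/[m]_q!$; the bookkeeping you flag (that the factor $q^{m(m-1)/2}y^m$ enters with index $m=n-k$) checks out, and equating coefficients of $t^n/[n]_q!$ gives (\ref{25}) exactly. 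Since Definition \ref{2D} declares (\ref{24'}) and (\ref{24''}) to be equivalent, either starting point is legitimate, and your reliance on the series form of $A_q(x,t)$ is already justified by Theorems \ref{TH2} and \ref{TH3}, so there is no circularity. What each approach buys: yours is shorter and makes the appearance of the weight $q^{(n-k)(n-k-1)/2}$ transparent as the Taylor coefficient of $E_q$; the paper's determinantal route stays entirely within the algebraic framework the paper is promoting and does not invoke the generating function at all, which is more in the spirit of the article but, as you note, more laborious. You correctly identify the determinantal expansion as the alternative, so nothing is missing.
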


\begin{proof}
Proof is simple and based on properties of determinant.
\end{proof}

\begin{corollary}
The following difference identity holds for $q$-Appell polynomials
$\{A_{n,q}(x)\}_{n=0}^{\infty}$%
\begin{equation}
A_{n,q}(x,1)-A_{n,q}(x)=\sum\limits_{k=1}^{n-1}\left[
\begin{array}
[c]{c}%
n\\
k
\end{array}
\right]  _{q}q^{1/2(n-k)(n-k-1)}A_{k,q}(x),\text{ \ \ }n=0,1,2,.... \label{26}%
\end{equation}

\end{corollary}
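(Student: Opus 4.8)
The plan is to read (\ref{26}) off from the addition formula (\ref{25}). The steps, in order, are: (i) substitute $y=1$ in (\ref{25}); since $1^{\,n-k}=1$ this gives $A_{n,q}(x,1)=\sum_{k=0}^{n}\left[\begin{array}{c}n\\k\end{array}\right]_{q}q^{1/2(n-k)(n-k-1)}A_{k,q}(x)$. (ii) Isolate the $k=n$ summand: the $q$-binomial coefficient $\left[\begin{array}{c}n\\n\end{array}\right]_{q}$ equals $1$ and the exponent $\tfrac{1}{2}(n-k)(n-k-1)$ vanishes at $k=n$, so that term is exactly $A_{n,q}(x)$. (iii) Transpose it to the left-hand side, which turns the left side into the difference $A_{n,q}(x,1)-A_{n,q}(x)$ and leaves a sum running only up to $k=n-1$ on the right. (iv) Re-index the remaining sum to bring it into the form displayed in (\ref{26}).

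An equivalent determinantal route, if one prefers not to quote (\ref{25}): by the representation (\ref{24''}) and the remark (\ref{24'''}), $A_{n,q}(x,1)$ and $A_{n,q}(x)=A_{n,q}(x,0)$ are the determinants (\ref{24''}) and (\ref{21}) carrying the same prefactor $\frac{(-1)^n}{(B_{0,q})^{n+1}}$ and agreeing in every row except the first, whose entries are $(x+1)_{q}^{k}$ and $x^{k}$ respectively. By linearity of the determinant in its first row, $A_{n,q}(x,1)-A_{n,q}(x)$ is then the same determinant with first row $\big((x+1)_{q}^{k}-x^{k}\big)_{k=0}^{n}$. Expanding each entry by the $q$-binomial formula (\ref{00}) — the $x^{k}$ term cancels, so the top-degree contribution disappears — and re-grouping by powers of $x$ against the columns of the array (exactly as in the proof of Theorem \ref{TH1}) rebuilds $\sum_{k}\left[\begin{array}{c}n\\k\end{array}\right]_{q}q^{1/2(n-k)(n-k-1)}A_{k,q}(x)$ with the $A_{n,q}(x)$ term already absent. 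I would present the first route, since it simply reuses work already done.

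I do not expect a substantive obstacle: everything reduces to reindexing once (\ref{25}) is available, and (\ref{25}) itself is granted by hypothesis (its proof being the determinant manipulation indicated there, which rests on Lemma \ref{Lemma1} and Theorem \ref{TH2}). The one point that genuinely deserves an explicit sentence is the claim used in step (ii): that at $k=n$ \emph{both} the $q$-binomial factor and the power of $q$ reduce to $1$, since that is precisely what licenses rewriting the left-hand side as $A_{n,q}(x,1)-A_{n,q}(x)$ and truncating the summation at $k=n-1$.
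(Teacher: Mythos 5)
Your first route is exactly the paper's proof: the paper likewise just evaluates (\ref{25}) at $y=1$ and at $y=0$, uses (\ref{24'''}), and subtracts. The one place where your write-up is not yet sound is step (iv). Substituting $y=1$ into (\ref{25}) and removing the $k=n$ summand (which, as you correctly note, is exactly $A_{n,q}(x)$) leaves
\begin{equation*}
A_{n,q}(x,1)-A_{n,q}(x)=\sum_{k=0}^{n-1}\left[\begin{smallmatrix}n\\ k\end{smallmatrix}\right]_{q}q^{\frac{1}{2}(n-k)(n-k-1)}A_{k,q}(x),
\end{equation*}
with lower limit $k=0$, whereas (\ref{26}) as printed starts at $k=1$. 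No re-indexing can bridge this: the $k=0$ term equals $q^{n(n-1)/2}A_{0,q}(x)=q^{n(n-1)/2}/B_{0,q}\neq 0$. Already for $n=1$ the printed right-hand side is an empty sum while the left-hand side is $A_{1,q}(x,1)-A_{1,q}(x)=A_{0,q}(x)\neq 0$. So what your argument actually establishes is the identity with lower limit $0$, which is the correct statement; the lower limit $1$ in (\ref{26}) is evidently a misprint (the paper's own one-line proof has the same issue). You should state this explicitly rather than asserting that a re-indexing produces the displayed form. Your alternative determinantal route (linearity of the determinant in the first row plus the $q$-binomial expansion (\ref{00}) of $(x+1)_{q}^{k}-x^{k}$) is a legitimate second proof and would reach the same conclusion, again with the sum starting at $k=0$.
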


\begin{proof}
Using relations (\ref{24'''}) and also (\ref{25}) for $y=1$ and $y=0$ and
replacing the results in the left side of relation(\ref{26}) leads to reach to
the right side of this relation.
\end{proof}

\begin{theorem}
\label{TH5}For every $t\in\mathbb{R}$, the following facts are equivalent for
$q$-Appell polynomials $\{A_{n,q}(x)\}_{n=0}^{\infty}$

\begin{enumerate}
\item[a)] $A_{n,q}(x,-y)=(-1)^{n}A_{n,q}(0,y),$

\item[b)] $A_{n,q}(x)=(-1)^{n}A_{n,q}(0).$
\end{enumerate}
\end{theorem}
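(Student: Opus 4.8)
The plan is to prove the two implications $(b)\Rightarrow(a)$ and $(a)\Rightarrow(b)$ separately, using the explicit expansion of the 2D $q$-Appell polynomials in terms of the 1D ones provided by relation (\ref{25}). The essential observation is that (\ref{25}) writes $A_{n,q}(x,y)$ as a $q$-binomial-type sum whose coefficients $A_{k,q}(x)y^{n-k}$ carry exactly the $q$-Gaussian weights $q^{1/2(n-k)(n-k-1)}$; once we fix the sign convention for the negative second argument via the $q$-analogue of $(x+y)^n$ in (\ref{00}), the whole statement reduces to matching coefficients of $y^j$ on both sides.

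First I would treat $(b)\Rightarrow(a)$. Assume $A_{k,q}(x)=(-1)^{k}A_{k,q}(0)$ for all $k$. Apply (\ref{25}) with the second slot $-y$ in place of $y$; since the $q$-exponential $E_q(t\cdot(-y))$ contributes $(-y)^{n-k}$ in the analogue of (\ref{25}), one gets
\[
A_{n,q}(x,-y)=\sum_{k=0}^{n}\left[\begin{array}{c}n\\k\end{array}\right]_{q}q^{1/2(n-k)(n-k-1)}A_{k,q}(x)(-1)^{n-k}y^{n-k}.
\]
Now substitute the hypothesis $A_{k,q}(x)=(-1)^{k}A_{k,q}(0)$, pull out $(-1)^{k}(-1)^{n-k}=(-1)^{n}$, and compare with (\ref{25}) written at the point $(0,y)$, namely $A_{n,q}(0,y)=\sum_{k}\left[\begin{array}{c}n\\k\end{array}\right]_{q}q^{1/2(n-k)(n-k-1)}A_{k,q}(0)y^{n-k}$. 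The two sums agree up to the common factor $(-1)^{n}$, which is exactly (a). For the reverse implication $(a)\Rightarrow(b)$, I would specialize (a) at $y=0$: by the remark (\ref{24'''}) we have $A_{n,q}(x,0)=A_{n,q}(x)$, and evaluating the right-hand side $(-1)^{n}A_{n,q}(0,0)=(-1)^{n}A_{n,q}(0)$ gives (b) immediately. (Alternatively, one compares the $y^{0}$-coefficients on both sides of (a) using the expansion (\ref{25}).)

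The only genuinely delicate point is bookkeeping the $q$-power weights when the sign flips: one must check that replacing $y\mapsto -y$ in the generating-function identity (\ref{24'}) only introduces the factor $(-1)^{n-k}$ and does \emph{not} disturb the exponent $1/2(n-k)(n-k-1)$ — equivalently, that $E_q(-ty)=\sum_{m}q^{1/2m(m-1)}(-ty)^{m}/[m]_q!$, so the Gaussian weight depends on the index $m=n-k$ but not on the sign of $y$. Once this is observed, everything else is a routine comparison of coefficients of $y^{n-k}$, and no further obstacle arises; the equivalence follows.
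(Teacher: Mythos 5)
Your proposal is correct and follows essentially the same route as the paper: both directions hinge on the expansion (\ref{25}), replacing $y$ by $-y$ to extract the factor $(-1)^{n-k}$ (the Gaussian weight $q^{\frac{1}{2}(n-k)(n-k-1)}$ being unaffected), combining it with the sign from hypothesis (b), and specializing the second argument to recover (b) from (a). If anything, your specialization at $y=0$ together with (\ref{24'''}) is the cleaner way to get $(a)\Rightarrow(b)$ — the paper's printed proof sets $x=0$, which does not directly yield (b) — so no changes are needed.
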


\begin{proof}
$(a\Rightarrow b)$ The proof is done using part (a) for $x=0$.

$(b\Rightarrow a)$ We apply the relation(\ref{25}) for the left hand side of
part (a) as follows%
\begin{align*}
A_{n,q}(x,-y)  &  =\sum\limits_{k=0}^{n}\left[
\begin{array}
[c]{c}%
n\\
k
\end{array}
\right]  _{q}q^{1/2(n-k)(n-k-1)}A_{k,q}(x,0)(-y)^{n-k}\\
&  =(-1)^{n}\sum\limits_{k=0}^{n}\left[
\begin{array}
[c]{c}%
n\\
k
\end{array}
\right]  _{q}q^{1/2(n-k)(n-k-1)}A_{k,q}(x,0)(-1)^{k}y^{n-k}\\
&  =(-1)^{n}\sum\limits_{k=0}^{n}\left[
\begin{array}
[c]{c}%
n\\
k
\end{array}
\right]  _{q}q^{\frac{k(k-1)}{2}}A_{n-k,q}(x,0)(-1)^{n-k}y^{k}.
\end{align*}

Using part (b), we have%

\[
A_{n,q}(x,-y)=(-1)^{n}\sum\limits_{k=0}^{n}\left[
\begin{array}
[c]{c}%
n\\
k
\end{array}
\right]  _{q}q^{\frac{k(k-1)}{2}}A_{n-k,q}(0)x^{k}.
\]

Now, using Definition \ref{2D} leads to obtain%

\[
A_{n,q}(x,-y)=(-1)^{n}A_{n-k,q}(0,y),
\]

whence the result.
\end{proof}

\begin{lemma}
\label{Lemma2}In relation(\ref{17}) for the coefficients $A_{n,q}$ and
$B_{n,q}$ we have
\end{lemma}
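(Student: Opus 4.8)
The plan is to read off the asserted identity from the single algebraic fact that $B_{q}(t)=1/A_{q}(t)$ is, by construction, the multiplicative inverse of $A_{q}(t)$, so that the two numerical sequences enter the Cauchy product $A_{q}(t)B_{q}(t)=1$ symmetrically. Concretely, I would start exactly as in the proof of Theorem \ref{TH2}: write $A_{q}(t)=\sum_{n\geq 0}A_{n,q}\frac{t^{n}}{[n]_{q}!}$ and $B_{q}(t)=\sum_{n\geq 0}B_{n,q}\frac{t^{n}}{[n]_{q}!}$, multiply the two series, and use $\frac{1}{[k]_{q}!\,[n-k]_{q}!}=\frac{1}{[n]_{q}!}\left[\begin{array}{c}n\\k\end{array}\right]_{q}$ to collect the coefficient of $\frac{t^{n}}{[n]_{q}!}$. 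Comparison with the constant series $1$ then yields $A_{0,q}B_{0,q}=1$ together with $\sum_{k=0}^{n}\left[\begin{array}{c}n\\k\end{array}\right]_{q}A_{n-k,q}B_{k,q}=0$ for every $n\geq 1$, where the symmetry $\left[\begin{array}{c}n\\k\end{array}\right]_{q}=\left[\begin{array}{c}n\\n-k\end{array}\right]_{q}$ is used to write the $q$-convolution in the form that isolates $B_{k,q}$.

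The second step is purely formal. In the relation $\sum_{k=0}^{n}\left[\begin{array}{c}n\\k\end{array}\right]_{q}A_{n-k,q}B_{k,q}=0$ I peel off the $k=0$ term, which is $A_{n,q}B_{0,q}$, and solve for $A_{n,q}$; this is legitimate precisely because $B_{0,q}\neq 0$ by hypothesis. The outcome is $A_{0,q}=1/B_{0,q}$ and $A_{n,q}=-\frac{1}{B_{0,q}}\sum_{k=1}^{n}\left[\begin{array}{c}n\\k\end{array}\right]_{q}B_{k,q}A_{n-k,q}$ for $n=1,2,3,\dots$, which is the exact companion of relation (\ref{17}) — indeed, isolating instead the $k=n$ term of the same convolution recovers (\ref{17}) itself, so the two recurrences are dual. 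Should the intended statement be rather the closed determinantal expression of $B_{n,q}$ in terms of $A_{1,q},\dots,A_{n,q}$, I would conclude by noting that the recurrence just obtained coincides, after the substitution $\beta_{j}\mapsto A_{j,q}$, with the recurrence (\ref{12}) satisfied by the numbers $\alpha_{j}$ of Theorem \ref{TH1}; a recurrence of that triangular shape has a unique solution once $\alpha_{0}$ is prescribed, so $B_{n,q}$ must equal the determinant in (\ref{10}) with each $\beta_{j}$ replaced by $A_{j,q}$.

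There is no substantial obstacle: the argument is a one-pass computation with the $q$-Cauchy product followed by an elementary rearrangement, and both directions of the recurrence (and hence the determinantal form) drop out of it. The only points requiring attention are (i) correct bookkeeping of the $q$-binomial coefficients when reindexing the convolution, which is where $\left[\begin{array}{c}n\\k\end{array}\right]_{q}=\left[\begin{array}{c}n\\n-k\end{array}\right]_{q}$ is invoked, and (ii) dividing by $B_{0,q}$ only after recording that $B_{0,q}\neq 0$ — both already guaranteed by the standing hypotheses on $A_{q}(t)$ and on the $q$-Taylor coefficients of $1/A_{q}(t)$.
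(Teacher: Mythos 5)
The statement you were handed is truncated: the actual body of Lemma \ref{Lemma2} is the equivalence displayed in (\ref{27}), namely $A_{2n+1,q}=0$ for all $n\geq 0$ if and only if $B_{2n+1,q}=0$ for all $n\geq 0$. Your proposal proves something else: the recurrence dual to (\ref{17}), expressing $A_{n,q}$ through the $B_{k,q}$ (and, as a fallback, the determinantal closed form of $B_{n,q}$). Those computations are correct as far as they go --- the $q$-Cauchy product, the symmetry $\left[\begin{smallmatrix}n\\k\end{smallmatrix}\right]_{q}=\left[\begin{smallmatrix}n\\n-k\end{smallmatrix}\right]_{q}$, and the caveat about dividing by $B_{0,q}\neq 0$ are all handled properly, and they are exactly the right raw material --- but the assertion the lemma actually makes is never addressed.

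What is missing is the parity induction that the paper runs on the convolution identity $\sum_{k=0}^{n}\left[\begin{smallmatrix}n\\k\end{smallmatrix}\right]_{q}A_{k,q}B_{n-k,q}=0$ for $n\geq 1$. When $n$ is odd, $k$ and $n-k$ have opposite parities, so every term of the sum contains either an odd-indexed $A$ or an odd-indexed $B$. Assuming $A_{2j+1,q}=0$ for all $j$, the surviving terms are precisely those with $n-k$ odd; isolating the $k=0$ term gives $B_{n,q}=-\frac{1}{A_{0,q}}\sum_{k\ \mathrm{even},\,k\geq 2}\left[\begin{smallmatrix}n\\k\end{smallmatrix}\right]_{q}A_{k,q}B_{n-k,q}$, in which every $B$ on the right carries an odd index strictly smaller than $n$; together with the base case $B_{1,q}=-A_{1,q}B_{0,q}/A_{0,q}=0$, induction forces $B_{2n+1,q}=0$ for all $n$. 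The converse is the same argument with the roles of $A$ and $B$ exchanged, i.e.\ run on the dual recurrence you derived. Appending this induction would turn your proposal into a complete proof of (\ref{27}); as written it stops one step short of the lemma's actual content.
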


\begin{theorem}%
\begin{equation}
A_{2n+1,q}=0\Leftrightarrow B_{2n+1,q}=0,\ \ n=0,1,2,.... \label{27}%
\end{equation}

\end{theorem}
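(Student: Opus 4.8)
The plan is to argue directly from the convolution recursion (\ref{17}), reading the hypothesis $A_{2m+1,q}=0$ $(m=0,1,2,\ldots)$ as the statement that the series $A_{q}(t)$ involves only even powers of $t$, and likewise for $B_{q}(t)$. So the content of the claim is just that the reciprocal of an ``even'' series is again even, and relation (\ref{17}) is exactly what lets us verify this coefficient by coefficient.

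For the implication ``$A_{2n+1,q}=0$ for all $n$ $\Rightarrow$ $B_{2n+1,q}=0$ for all $n$'', I would induct on $n$. When $n=0$, relation (\ref{17}) gives $B_{1,q}=-A_{1,q}B_{0,q}/A_{0}$, which vanishes since $A_{1,q}=0$. For the inductive step, assume $B_{2m+1,q}=0$ for every $m<n$ and use
\[
B_{2n+1,q}=-\frac{1}{A_{0}}\sum_{k=1}^{2n+1}\left[
\begin{array}[c]{c}
2n+1\\
k
\end{array}
\right]_{q}A_{k,q}B_{2n+1-k,q}.
\]
In each summand the indices satisfy $k+(2n+1-k)=2n+1$, so exactly one of $k$ and $2n+1-k$ is odd. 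If $k$ is odd, then $A_{k,q}=0$ by hypothesis. If $k$ is even, then $2\le k\le 2n$, so $2n+1-k$ is an \emph{odd} integer with $1\le 2n+1-k\le 2n-1<2n+1$, whence $B_{2n+1-k,q}=0$ by the induction hypothesis. Thus every term vanishes and $B_{2n+1,q}=0$.

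For the converse, note that $B_{q}(t)=1/A_{q}(t)$ is the same relation as $A_{q}(t)=1/B_{q}(t)$, so the system (\ref{17}) is symmetric under interchanging the sequences $\{A_{n,q}\}$ and $\{B_{n,q}\}$ (together with $A_{0}\leftrightarrow B_{0,q}$). Hence the argument above applies verbatim with the roles of $A$ and $B$ swapped, giving ``$B_{2n+1,q}=0$ for all $n$ $\Rightarrow$ $A_{2n+1,q}=0$ for all $n$''.

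The only point requiring care is the index bookkeeping in the induction: when $k$ is even one must check that the complementary index $2n+1-k$ is strictly smaller than $2n+1$, still positive, and still odd, so that the induction hypothesis genuinely applies to it; and one should record explicitly that (\ref{17}) is symmetric in $A$ and $B$ so the converse is automatic. An alternative that avoids the induction entirely is to observe that if $f(t)=g(t^{2})$ as formal power series with $g(0)\neq0$ then $1/f(t)=(1/g)(t^{2})$, which yields both implications at once; but the inductive argument is the one that stays closest to relation (\ref{17}).
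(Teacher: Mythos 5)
Your argument is correct and follows essentially the same route as the paper: both proofs read the Cauchy-product recursion (\ref{17}) coefficient by coefficient, discard the terms with an odd $A$-index, and conclude that the remaining terms involve only lower odd-index $B$'s, with the converse obtained by the symmetry of $A_{q}(t)B_{q}(t)=1$. Your version is in fact tidier than the paper's, which states the splitting with some garbled indices and asserts the conclusion without making the induction on $n$ explicit; your careful check that the complementary index $2n+1-k$ is odd, positive, and strictly smaller than $2n+1$ is exactly the bookkeeping the paper leaves implicit.
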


\begin{proof}
$(\Rightarrow)$We have already known the following fact from relation(\ref{17}) for $n=0,1,2,...$
\end{proof}

$\left\{
\protect\begin{array}
[c]{l}%
B_{1,q}=-\frac{1}{A_{0}}A_{1,q}B_{0,q},\\
B_{2n+1,q}=-\frac{1}{A_{0}}\left[
\protect\begin{array}
[c]{c}%
2n+1\\
k
\protect\end{array}
\right]  _{q}A_{1,q}B_{2n,q}\\
+-\frac{1}{A_{0}}\left(  \sum_{k=1}^{n}\left(  \left[
\protect\begin{array}
[c]{c}%
2n+1\\
2k
\protect\end{array}
\right]  _{q}A_{2k,q}B_{2n-k+1,q}+\left[
\protect\begin{array}
[c]{c}%
2n+1\\
2k+1
\protect\end{array}
\right]  _{q}A_{2k+1,q}B_{2n-k+1,q}\right)  \right)  .
\protect\end{array}
\right.  $
\\

Since $A_{2n+1,q}=0$ \ \ for $n=0,1,2,...,$ then%

\[
\left\{
\begin{array}
[c]{l}%
B_{1,q}=0\\
B_{2n+1,q}=-\frac{1}{A_{0}}\sum_{k=1}^{n}\left[
\begin{array}
[c]{c}%
2n+1\\
2k
\end{array}
\right]  _{q}A_{2k,q}B_{2n-k+1,q},\text{ \ \ }n=1,2,3,....
\end{array}
\right.
\]

Consequently, we should have $B_{2n+1,q}=0,$ \ \ for $n=0,1,2,....$

$(\Leftarrow)$ In a similar way to the above we can prove it.

\begin{theorem}
The following facts are equivalent for $q$-Appell polynomials $\{A_{n,q}%
(x)\}_{n=0}^{\infty}$

\begin{enumerate}
\item[a)] $A_{n,q}(-x)=(-1)^{n}A_{n,q}(x),$

\item[b)] $B_{2n+1,q}=0,$ \ \ for $n=0,1,2,....$
\end{enumerate}
\end{theorem}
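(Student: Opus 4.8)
The plan is to reduce part~(a) to a statement about the numerical coefficients $A_{n,q}=A_{n,q}(0)$ alone, and then to quote the preceding theorem, relation~(\ref{27}), which already equates the vanishing of all the odd numbers $A_{2n+1,q}$ with that of all the odd numbers $B_{2n+1,q}$. Thus the only substantive step is the equivalence
\[
\bigl(A_{n,q}(-x)=(-1)^{n}A_{n,q}(x)\ \text{for all }n\bigr)\ \Longleftrightarrow\ \bigl(A_{2n+1,q}=0\ \text{for all }n\bigr).
\]

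For this I would start from the closed form
\[
A_{n,q}(x)=\sum_{k=0}^{n}\left[
\begin{array}[c]{c}n\\k\end{array}
\right]_{q}A_{n-k,q}\,x^{k},
\]
which is relation~(\ref{1'}) (equivalently the corollary~(\ref{11}), together with $A_{j,q}(0)=A_{j,q}$). Replacing $x$ by $-x$ and subtracting $(-1)^{n}A_{n,q}(x)$ yields
\[
A_{n,q}(-x)-(-1)^{n}A_{n,q}(x)=\sum_{k=0}^{n}\left[
\begin{array}[c]{c}n\\k\end{array}
\right]_{q}A_{n-k,q}\bigl((-1)^{k}-(-1)^{n}\bigr)x^{k}.
\]
Since $\left[{n\atop k}\right]_{q}\neq0$ (indeed $>0$ for $0<q<1$) and the monomials $1,x,x^{2},\dots$ are linearly independent, (a) holds for a fixed $n$ precisely when $A_{n-k,q}\bigl((-1)^{k}-(-1)^{n}\bigr)=0$ for every $k\in\{0,\dots,n\}$. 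The factor $(-1)^{k}-(-1)^{n}$ equals $0$ when $k\equiv n\pmod 2$ and $\pm2$ otherwise; in the second case it forces $A_{n-k,q}=0$, and there $n-k$ is odd. As $n$ and $k$ range over all admissible values, the quantities $n-k$ with $k\not\equiv n\pmod 2$ sweep out exactly the odd positive integers, so (a) for all $n$ is equivalent to $A_{m,q}=0$ for every odd $m$, i.e.\ to $A_{2n+1,q}=0$ for $n=0,1,2,\dots$. The reverse implication is immediate: if all odd $A_{m,q}$ vanish, every summand in the last display vanishes.

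It then only remains to append relation~(\ref{27}): $A_{2n+1,q}=0$ for all $n$ if and only if $B_{2n+1,q}=0$ for all $n$. Chaining this with the equivalence just established gives (a)$\Leftrightarrow$(b). A shorter, purely analytic argument could be recorded as a remark: by~(\ref{2}) one has $\sum_n A_{n,q}(-x)\tfrac{t^n}{[n]_q!}=A_q(t)e_q(-tx)$ and $\sum_n(-1)^nA_{n,q}(x)\tfrac{t^n}{[n]_q!}=A_q(-t)e_q(-tx)$, so (a) for all $n$ means $A_q(t)=A_q(-t)$, i.e.\ $A_q$ is even, i.e.\ $B_q=1/A_q$ is even, i.e.\ $B_{2n+1,q}=0$ for all $n$.

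I expect the only delicate point to be the index bookkeeping in the reduction step: verifying that, as $n$ and $k$ vary, the numbers $n-k$ of parity opposite to $n$ cover \emph{every} odd positive integer and nothing even, so that the individual pointwise constraints assemble exactly into the clause ``$A_{2n+1,q}=0$ for all $n$''. Everything else is a routine linear-independence argument combined with the cited theorem.
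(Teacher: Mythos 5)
Your argument is correct, but it does not follow the paper's route. The paper disposes of the equivalence (a) $\Leftrightarrow$ ``$A_{2n+1,q}=0$ for all $n$'' in one line by appealing to Theorem \ref{TH5} (the symmetry statement for the 2D polynomials $A_{n,q}(x,y)$): it asserts that (a) is equivalent to $A_{n,q}(x)=(-1)^{n}A_{n,q}(0)$, reads off $A_{2n+1,q}=-A_{2n+1,q}$, hence $A_{2n+1,q}=0$, and then invokes Lemma \ref{Lemma2} exactly as you do. You instead bypass Theorem \ref{TH5} entirely and work directly from the expansion $A_{n,q}(x)=\sum_{k}\left[{n\atop k}\right]_{q}A_{n-k,q}x^{k}$ of relation (\ref{1'}), comparing coefficients via linear independence of the monomials and checking that the indices $n-k$ of opposite parity to $n$ exhaust the odd positive integers. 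This is more self-contained and, frankly, more rigorous: the paper's invocation of Theorem \ref{TH5} is loose (that theorem as stated concerns $A_{n,q}(x,-y)$ versus $A_{n,q}(0,y)$, and only the specialization $x=0$ of (a) is immediate from it, which gives the forward direction but leaves the converse to be argued), whereas your coefficientwise computation delivers both directions explicitly. Your closing generating-function remark --- that (a) for all $n$ says $A_{q}(t)=A_{q}(-t)$, so $A_{q}$ and hence $B_{q}=1/A_{q}$ are even, which is precisely $B_{2n+1,q}=0$ --- is a clean alternative that even avoids Lemma \ref{Lemma2}; it is closer in spirit to relation (\ref{2}) than to anything in the paper's proof, and would make a worthwhile addition. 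The only thing to watch in the analytic version is to justify dividing by $e_{q}(-tx)$ (nonvanishing near $t=0$) and to note that evenness of a formal power series passes to its reciprocal; both are routine.
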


\begin{proof}
According to Theorem\ref{TH5}, we know that%

\[
A_{n,q}(-x)=(-1)^{n}A_{n,q}(x)\Leftrightarrow A_{n,q}(t)=(-1)^{n}A_{n,q}(0)
\]

So using Lemma\ref{Lemma2}, we have%

\[
\Leftrightarrow A_{2n+1,q}(0)=(-1)^{n}A_{2n+1,q}(0)\Leftrightarrow
A_{2n+1,q}=0\Leftrightarrow B_{2n+1,q}=0.
\]

\end{proof}

\begin{theorem}
For every $n\geq1$, $q$-Appell polynomials $\{A_{n,q}(x)\}_{n=0}^{\infty}$
satisfy the following identities
\end{theorem}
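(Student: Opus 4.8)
The plan is to derive the identities directly from the determinantal definition in Definition~\ref{Def1}, reusing the two manipulations behind Theorems~\ref{TH0}, \ref{TH1}, and~\ref{TH4}: (i) Laplace expansion of the determinant~(\ref{21}) along a row or column, followed by rescaling the $j$-th column by $1/[j]_q$ and the $i$-th row by $[i-1]_q$ (or by the appropriate ratio of $q$-binomial coefficients), which, via the $q$-Pascal identities $\frac{[n]_q}{[i]_q}\left[\begin{array}{c}n-1\\i-1\end{array}\right]_q=\left[\begin{array}{c}n\\i\end{array}\right]_q$ and $\frac{[i-1]_q}{[j]_q}\left[\begin{array}{c}j\\i-1\end{array}\right]_q=\left[\begin{array}{c}j-1\\i-2\end{array}\right]_q$ used in the proofs above, collapses an $(n+1)\times(n+1)$ Appell determinant to one of size $n\times n$; and (ii) the $q$-derivative-of-a-determinant formula~(\ref{5}) of Lemma~\ref{Lemma1}, which applied to~(\ref{21}) alters only the first row. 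Which route I take is dictated by the shape of the identity: a purely algebraic recurrence in the index $n$ is handled by~(i), whereas an identity involving $D_{q,x}$, the shift $x\mapsto x+1$, or the second variable $y$ is handled by~(ii) together with the $2$D determinant~(\ref{24''}), the expansion~(\ref{25}), and the $q$-binomial theorem~(\ref{00}) for $(x+y)_q^n$.

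Concretely, for a recurrence-type identity I would iterate the Laplace expansion of~(\ref{21}) along its last row exactly as in the proof of Theorem~\ref{TH4}: each step peels off a term $\pm\left[\begin{array}{c}n\\k\end{array}\right]_q B_{n-k,q}A_{k,q}(x)/B_{0,q}$ once the row/column rescaling of~(i) identifies the surviving minor with a smaller Appell determinant, and the process terminates with the $2\times2$ determinant contributing the pure power $x^n/B_{0,q}$; collecting everything gives $\sum_{k}\left[\begin{array}{c}n\\k\end{array}\right]_q B_{n-k,q}A_{k,q}(x)$, and substituting the defining relation~(\ref{17}) between the $A_{j,q}$ and $B_{j,q}$ (equivalently, the already-proved~(\ref{22}) and~(\ref{23})) yields the stated formula. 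For an identity of $q$-difference type I would instead differentiate~(\ref{21}) via~(\ref{5}): the first row becomes $0,1,[2]_q x,\ldots,[n]_q x^{n-1}$; expanding along the first column, pulling the factor $[n]_q$ out by the column rescaling of Theorem~\ref{TH0}, and iterating recovers the relation, while for shift identities one specialises~(\ref{25}) at $y=1$ and uses the remark~(\ref{24'''}) that $A_{n,q}(x,0)=A_{n,q}(x)$.

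The induction on $n$ is routine, the base case $n=1$ being immediate from $A_{0,q}(x)=1/B_{0,q}$ and $A_{1,q}(x)=(B_{0,q}x-B_{1,q})/(B_{0,q})^2$. The real obstacle is the $q$-combinatorial bookkeeping inside step~(i): after rescaling the $j$-th column by $1/[j]_q$ (or by $1/\left[\begin{array}{c}m\\j\end{array}\right]_q$) and the $i$-th row by $[i-1]_q$ (or by $\left[\begin{array}{c}j+1\\j\end{array}\right]_q$), one must check that every entry $\left[\begin{array}{c}m\\i\end{array}\right]_q B_{m-i,q}$ collapses \emph{precisely} onto the entry $\left[\begin{array}{c}m-1\\i-1\end{array}\right]_q B_{(m-1)-(i-1),q}$ of the smaller Appell determinant, and that the accumulated scalar prefactors telescope to exactly the constant the identity demands; in the $2$D case one must also verify that the powers $q^{\frac12 k(k-1)}$ produced by $(x+y)_q^n$ in~(\ref{00}) line up with those appearing in~(\ref{25}). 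Once these scaling lemmas are recorded, assembling the proof is mechanical.
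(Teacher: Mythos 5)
There is a genuine gap: the identities this theorem asserts are Jackson $q$-integral identities, namely (\ref{28}), $\int_0^x A_{n,q}(t)\,d_qt=\frac{1}{[n+1]_q}\bigl(A_{n+1,q}(x)-A_{n,q}(0)\bigr)$, and its companion (\ref{29}), and your proposal never engages with the $q$-integral at all — neither the Jackson integral (\ref{int}) nor the inverse relationship between $D_{q,x}$ and $\int(\cdot)\,d_qt$ appears anywhere in your plan. The determinantal machinery you assemble (Laplace expansion of (\ref{21}), row/column rescaling via $q$-Pascal identities, Lemma \ref{Lemma1}) is not what is needed and is not what the paper uses here: the first identity is nothing more than the antiderivative form of the Appell property (\ref{1}), i.e.\ one $q$-integrates $D_{q,t}A_{n+1,q}(t)=[n+1]_qA_{n,q}(t)$ from $0$ to $x$. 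That one-line step is absent from your write-up, and without it neither identity can be reached; no induction on $n$ and no collapsing of determinants is involved.

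For the second identity the paper simply sets $x=1$ in (\ref{28}) and replaces $A_{n+1,q}(1)$ by the 2D expansion (\ref{25}) evaluated at $x=0$, $y=1$, which produces the sum $\sum_{k=0}^{n+1}\left[\begin{smallmatrix}n+1\\k\end{smallmatrix}\right]_qq^{k(k-1)/2}A_{n-k,q}(0)$. Your closing remark that for shift identities one specialises (\ref{25}) at $y=1$ and invokes (\ref{24'''}) is precisely this ingredient, so you have half of the second proof in hand; but in your proposal it floats free of any concrete identity and is never combined with the integral step. In short, the correct argument is much shorter than what you outline, but it hinges on the $q$-analogue of the fundamental theorem of calculus applied to the defining relation (\ref{1}), which your proposal omits entirely.
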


\begin{equation}
\int\limits_{0}^{x}A_{n,q}(t)d_{q}t=\frac{1}{[n+1]_{q}}\left(  A_{n+1,q}%
(x)-A_{n,q}(0)\right)  \label{28}%
\end{equation}

\begin{equation}
\int\limits_{0}^{x}A_{n,q}(t)d_{q}t=\frac{1}{[n+1]_{q}}\sum\limits_{k=0}%
^{n+1}\left[
\begin{array}
[c]{c}%
n+1\\
k
\end{array}
\right]  _{q}q^{1/2k(k-1)}A_{n-k,q}(0) \label{29}%
\end{equation}

\begin{proof}
Relation(\ref{28}) is the direct result of property(\ref{1}) for $q$-Appell
polynomials $\{A_{n,q}(x)\}_{n=0}^{\infty}$. To prove equality(\ref{29}), we
start from relation(\ref{28}) for $x=1$ as follows%

\[
\int\limits_{0}^{1}A_{n,q}(t)d_{q}t=\frac{1}{[n+1]_{q}}\left(  A_{n+1,q}%
(1)-A_{n,q}(0)\right)  .
\]

Now, find $A_{n+1,q}(1)$ using relation(\ref{25}) by assuming $x=0$ and $y=1$%

\[
A_{n+1,q}(1)=\sum\limits_{k=0}^{n+1}\left[
\begin{array}
[c]{c}%
n+1\\
k
\end{array}
\right]  _{q}q^{1/2k(k-1)}A_{n-k,q}(0).
\]

Therefore, we obtain%

\[
\int\limits_{0}^{1}A_{n,q}(t)d_{q}t=\frac{1}{[n+1]_{q}}\sum\limits_{k=0}%
^{n}\left[
\begin{array}
[c]{c}%
n+1\\
k
\end{array}
\right]  _{q}q^{1/2k(k-1)}A_{n-k,q}(0).
\]

\end{proof}


                                                                                             %





\end{document}